\documentclass[11pt,a4paper]{article}
\usepackage[T1]{fontenc}
\usepackage[utf8]{inputenc}
\usepackage{lmodern}
\usepackage[margin=30mm]{geometry}
\usepackage{amsmath,amssymb,amsthm}
\usepackage{enumitem}
\usepackage{microtype}
\usepackage{hyperref}
\hypersetup{hidelinks,
  pdftitle={A lower bound on the density of prefixes with maximal palindromic length},
  pdfauthor={Josef Rukavicka},
  pdfsubject={Combinatorics on words; palindromic length},
  pdfkeywords={palindromic length, palindromic prefix, palindromic factorization}}
\allowdisplaybreaks[1]
\numberwithin{equation}{section}

\theoremstyle{plain}
\newtheorem{theorem}{Theorem}[section]
\newtheorem{proposition}[theorem]{Proposition}
\newtheorem{lemma}[theorem]{Lemma}

\theoremstyle{definition}
\newtheorem{definition}[theorem]{Definition}
\newtheorem{example}[theorem]{Example}
\theoremstyle{remark}
\newtheorem{remark}[theorem]{Remark}

\DeclareMathOperator{\Suffix}{Suf}
\DeclareMathOperator{\Prefix}{Prf}
\DeclareMathOperator{\Factor}{Fac}
\DeclareMathOperator{\Pal}{Pal}
\DeclareMathOperator{\PL}{PL}
\DeclareMathOperator{\maxPrefixPL}{maxPrfPL}
\DeclareMathOperator{\occur}{occur}
\DeclareMathOperator{\mper}{mper}
\DeclareMathOperator{\order}{order}
\DeclareMathOperator{\Mirror}{Mirror}
\DeclareMathOperator{\red}{red}
\DeclareMathOperator{\PalFac}{PalFac}
\DeclareMathOperator{\cut}{cut}
\newcommand{\FF}{F}
\newcommand{\doi}[1]{\href{https://doi.org/#1}{doi:\nolinkurl{#1}}}

\title{A lower bound on the density of prefixes\protect\\
with maximal palindromic length}
\author{Josef Rukavicka\thanks{Department of Mathematics,
Faculty of Nuclear Sciences and Physical Engineering, Czech Technical University in Prague
(josef.rukavicka@seznam.cz).}}

\date{\small{September 14, 2026}\\
   \small Mathematics Subject Classification: 68R15}

\begin{document}
\maketitle

\begin{abstract}
The palindromic length $\PL(u)$ of a nonempty finite word $u$ is the
least number of nonempty palindromes whose concatenation is $u$.
For an infinite word $w$, let $P_w(n)$ be the number of nonempty
palindromic prefixes of $w[1,n]$, and let $T_w(n)$ consist of those
prefixes $w[1,j]$, $1\leq j\leq n$, whose palindromic length equals the
maximum attained among the nonempty prefixes of $w[1,j]$.
We prove the finite inequality $|T_w(n)|\geq P_w(n)$ for every $n\geq1$.
In particular, if $w$ has infinitely many palindromic prefixes, then
\[
 \liminf_{n\to\infty}\frac{|T_w(n)|}{P_w(n)}\geq1.
\]
The coefficient $1$ is optimal, already for a nonconstant periodic
word.  The proof uses chains of occurrences connected by palindromic
factors, together with trimming and reflection arguments that preserve
lower bounds on palindromic length.
\end{abstract}

\noindent\textbf{Keywords:} palindromic length; palindromic prefixes;
palindromic factorization; combinatorics on words; prefix density.

\smallskip
\noindent\textbf{2020 Mathematics Subject Classification:} 68R15.

\section{Introduction}
\label{sec:introduction}

The \emph{palindromic length} of a nonempty finite word is the least
number of nonempty palindromes whose concatenation is that word.
This parameter connects the local occurrence of palindromes with the
factorization structure of finite and infinite words.  Frid, Puzynina,
and Zamboni~\cite{FPZ2013} initiated a systematic study of bounded
palindromic length in infinite words.  They conjectured that an infinite
word whose factors have uniformly bounded palindromic length is
ultimately periodic, and established, in particular, the obstruction
provided by power-free words.  Saarela~\cite{Saarela2017} proved that
boundedness for prefixes is equivalent to boundedness for all factors:
if the palindromic length of every prefix is at most $K$, then that of
every factor is at most $2K$.  The conjecture was subsequently proved by
Rukavicka~\cite{Rukavicka2026}.  These results concern whether
palindromic length is bounded; they do not by themselves quantify how
often a prefix attains the largest value seen so far.

More detailed information is available for several structured families
of infinite words.  Frid~\cite{Frid2018} used numeration systems associated
with Sturmian words to prove unboundedness of their prefix palindromic
length.  For the Thue--Morse word, Frid~\cite{Frid2019} determined the
prefix palindromic-length sequence explicitly and proved that it is
$2$-regular.  Frid, Laborde, and Peltom\"aki~\cite{FLP2021} studied
prefix palindromic length of automatic words, proving regularity of this sequence when the word has only finitely
many distinct palindromic factors, and obtaining
explicit descriptions for the paperfolding and Rudin--Shapiro words.
A different approach is provided by the left and right greedy
factorizations studied by Bucci and Richomme~\cite{BR2018}, who proved
ultimate periodicity under boundedness of the corresponding greedy
prefix lengths.  On the algorithmic side, Fici, Gagie, K\"arkk\"ainen,
and Kempa~\cite{FGKK2014} gave an $O(n\log n)$-time algorithm for minimum
palindromic factorization.  These contributions illustrate the
structural and quantitative questions surrounding the sequence of
palindromic lengths of prefixes.

The lengths of palindromic prefixes form another natural source of
quantitative information.  Fischler~\cite{Fischler2006} studied infinite
words with abundant palindromic prefixes, described the words whose
successive palindromic-prefix lengths satisfy
$a_{i+1}\leq 2a_i+1$, and investigated ratios of successive lengths.
Here we impose no such growth condition.  Instead, we compare the
number of palindromic prefixes with the number of prefixes attaining
the running maximum of palindromic length.

Let $w$ be a right-infinite word over a finite alphabet.  For a nonempty
finite word $v$, write
\[
 \maxPrefixPL(v)=\max\{\PL(s):s\in\Prefix(v)\cap\Sigma^+\}.
\]
For $n\geq1$, let $P_w(n)$ be the number of nonempty palindromic prefixes
of $w[1,n]$, and put
\[
 T_w(n)=\{w[1,j]:1\leq j\leq n,
             \PL(w[1,j])=\maxPrefixPL(w[1,j])\}.
\]
A prefix belongs to $T_w(n)$ when it attains the running maximum at its
own length; ties with an earlier maximum are included.  Thus this is
not the set of strict record positions, nor the set of prefixes
attaining the maximum over the entire interval $[1,n]$.

Our main result, Proposition~\ref{nfh76dg347g}, is the finite inequality
\begin{equation}
\label{eq:introduction-main}
 |T_w(n)|\geq P_w(n)\qquad(n\geq1),
\end{equation}
valid for every right-infinite word $w$.  In particular, when $w$ has
infinitely many palindromic prefixes,
\[
 \liminf_{n\to\infty}\frac{|T_w(n)|}{P_w(n)}\geq1.
\]
The coefficient $1$ is best possible, already for the periodic word
$(ab)^\infty$ with $a\neq b$; see Example~\ref{ex:sharpness}.
No aperiodicity, recurrence, or power-avoidance assumption is needed.
The normalization is by $P_w(n)$ rather than by $n$: the result is a
relative counting bound and does not assert positive natural density
of the selected positions.  Nor do we assert that the quotient has an
ordinary limit.

The proof proceeds through a class $\Gamma(t)$ defined by chains of
occurrences of $t$ and $t^R$ connected by palindromic factors.  We prove
that
\[
 u\in\Gamma(t)\quad\Longrightarrow\quad\PL(u)\geq\PL(t)
\]
and establish trimming properties for these chains.  As a consequence,
if $w[1,a]$ and $w[1,b]$ are palindromes with $a<b$, then
\[
 \PL(w[1,r+b-a])\geq\PL(w[1,r])\qquad(1\leq r\leq a).
\]
This translation property places an index attaining the running maximum
between every two consecutive palindromic-prefix lengths.  Choosing
such indices in disjoint intervals proves
\eqref{eq:introduction-main}.

The paper is organized as follows.  Section~\ref{sec:preliminaries}
fixes notation.  Section~\ref{sec:palindromic-concatenation} introduces
$\Gamma(t)$ and proves an occurrence-count identity.
Section~\ref{sec:tailing} studies the reflection iteration and the
occurrences that cross palindromic-factorization boundaries.
Section~\ref{sec:trimming} establishes trimming and the
palindromic-length comparison.  Section~\ref{sec:density} proves the
counting bound and its sharpness.

\section{Preliminaries}\label{sec:preliminaries}
Let \(\Sigma\) be a finite alphabet.  We write \(\Sigma^{\infty}\) for the
set of right-infinite words over \(\Sigma\).
Let \(\epsilon\) be the empty word.
Let \(\mathbb{N}_1\) denote the set of all positive integers.
Let \(\mathbb{Q}\) denote the set of all rational numbers.

A positive integer $p\leq |u|$ is a \emph{period} of a nonempty finite
word $u$ if $u[i]=u[i+p]$ for all $1\leq i\leq |u|-p$.
Let $\mper(u)$ be the least such integer, and put
$\order(u)=|u|/\mper(u)$.

\begin{definition}
Let \(t\) be a finite word over \(\Sigma\), and let
\(n=\vert t\vert\).  The \emph{reversal} of \(t\), denoted by
\(t^R\), is defined by \(\epsilon^R=\epsilon\) when \(n=0\), and by
\[
    t^R=t[n]t[n-1]\cdots t[1]
\]
when \(n\geq 1\).  Equivalently,
\(t^R[i]=t[n+1-i]\) for every \(i\in\{1,2,\ldots,n\}\).
\end{definition}

We define \(\Pal^+=\{w\in\Sigma^+\mid w=w^R\}\) and
\(\Pal=\Pal^+\cup\{\epsilon\}\).

Note that \(\Pal^+\subseteq \Sigma^+\) is the set of all non-empty palindromes.

Given \(u\in\Sigma^+\), let \(\PL(u)=k\in\mathbb{N}_1\) be the minimal value such that \(u\) is a concatenation of \(k\) nonempty palindromes. We call \(\PL(u)\) the \emph{palindromic length} of \(u\).

Given \(u\in\Sigma^*\cup\Sigma^{\infty}\), 
let \[\Factor(u)=\{x\in\Sigma^*\mid \exists y_1\in\Sigma^*\mbox{ and }y_2\in\Sigma^*\cup\Sigma^{\infty}: u=y_1xy_2\}\mbox{, }\]
let \[\Prefix(u)=\{x\in\Sigma^*\mid \exists y\in\Sigma^*\cup\Sigma^{\infty}: u=xy\}.\]
Given \(u\in\Sigma^*\), 
let \[\Suffix(u)=\{y\in\Sigma^*\mid \exists x\in\Sigma^*: u=xy\}.\]

Given \(x,y\in\Sigma^+\), let 
\[\widetilde\occur(x,y)=\{(\alpha,\beta)\in\mathbb{N}_1^2\mid x[\alpha,\beta]=y\}.\]
\[\occur(x,y)=\vert\widetilde \occur(x,y)\vert.\]

Given \(\alpha,\beta,i\in\mathbb{N}_1\) and \(\alpha\leq i\leq \beta\), let 
\(\Mirror(\alpha,\beta,i)=\overline i\in\mathbb{N}_1\) be such that \(i-\alpha=\beta-\overline i\); equivalently,
\(\Mirror(\alpha,\beta,i)=\alpha+\beta-i\).

For a finite word \(u\), the notation \(u[a,b]\) denotes the factor
at positions \(a,a+1,\ldots,b\), with
\(1\leq a\leq b\leq\vert u\vert\).

Given \(\mu\in\mathbb N_1\) and \(u\in\Sigma^+\), define
\[
\begin{split}
\PalFac(u,\mu)=\{\,&(d_1,\ldots,d_\mu)\in\mathbb N_1^\mu:\\
 &0=d_0<d_1<\cdots<d_\mu=\vert u\vert,\\
 &u[d_{i-1}+1,d_i]\in\Pal^+
       \quad(1\leq i\leq\mu)\,\}.
\end{split}
\]
Here \(d_0=0\) is an auxiliary cut, not a component of the vector.
Thus the associated palindromic factorization is
\[
 u=u[1,d_1]u[d_1+1,d_2]\cdots u[d_{\mu-1}+1,d_\mu].
\]
Let \(\PalFac(u)=\bigcup_{\mu\geq1}\PalFac(u,\mu)\).
The notation \(\vert\vec d\vert\) denotes the number of components of
\(\vec d\), and similarly for \(\vec\pi\).

\section{Palindromic concatenation}\label{sec:palindromic-concatenation}

Given \(t\in\Sigma^+\) and \(\theta\in\mathbb{N}_1\), let \(\widehat\Gamma(t,\theta)\) be the set of all pairs \((u,\vec{\pi})\) such that 
\(u\in\Sigma^+\), \(\vec{\pi}=(\pi_1,\pi_2,\dots,\pi_{\theta})\in\mathbb{N}_1^{\theta}\), 
\[
    t\in\Prefix(u)\cap\Suffix(u),
\]

\[
    1=\pi_1<\pi_2<\cdots<\pi_{\theta}
      =\vert u\vert-\vert t\vert+1,
\]
\[
    u[\pi_i,\pi_i+\vert t\vert-1]\in\{t,t^R\}
    \qquad(i\in\{1,2,\ldots,\theta\}),
\]
and
\[
    u[\pi_i,\pi_{i+1}+\vert t\vert-1]\in\Pal^+
    \qquad(i\in\{1,2,\ldots,\theta-1\}).
\]
The last condition is vacuous when \(\theta=1\).
Let \(\widehat\Gamma(t)=\bigcup_{\theta\geq 1}\widehat\Gamma(t,\theta)\), 
let \(\Gamma(t,\theta)=\{u\in\Sigma^+:\exists\vec\pi,\ (u,\vec\pi)\in\widehat\Gamma(t,\theta)\}\), and 
let \(\Gamma(t)=\bigcup_{\theta\geq 1}\Gamma(t,\theta)\).

\begin{proposition}
\label{duif9b445689e}
If \(t\in\Sigma^+\), \((u,\vec{\pi})\in\widehat\Gamma(t)\), \(x\in\Sigma^+\), and
\(\vert x\vert\leq\vert t\vert\), then
\[
    \occur(t,x)-\occur(t,x^R)
    =\occur(u,x)-\occur(u,x^R).
\]
\end{proposition}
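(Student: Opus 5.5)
Define the defect $\delta(v,x)=\occur(v,x)-\occur(v,x^R)$ for a word $v$ and $x\in\Sigma^+$; the claim is $\delta(u,x)=\delta(t,x)$. The plan is to argue by induction on $\theta$ for pairs $(u,\vec\pi)\in\widehat\Gamma(t,\theta)$.

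Two elementary facts will be used throughout.
\begin{enumerate}[label=(\roman*)]
\item For every word $v$ we have $\delta(v^R,x)=-\delta(v,x)$, since $(\alpha,\beta)\mapsto(|v|+1-\beta,|v|+1-\alpha)$ is a bijection from occurrences of $x$ in $v$ onto occurrences of $x^R$ in $v^R$. In particular $\delta(p,x)=0$ for every palindrome $p$.
\item Counting occurrences in an overlap. Let $v=yz$ with $y$ ending in a factor $s$ and $z'=sz$ starting with $s$, where $|s|\geq|x|-1$. Then every occurrence of $x$ in $v$ lies entirely inside $y$ or entirely inside $z'$. The occurrences lying inside both are exactly those inside $s$. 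Inclusion–exclusion therefore gives $\delta(v,x)=\delta(y,x)+\delta(z',x)-\delta(s,x)$. In our use the overlap $s$ has length $|t|\geq|x|$.
\end{enumerate}

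For the base case $\theta=1$ we have $\pi_1=1=|u|-|t|+1$, so $u=t$ and there is nothing to prove.

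For the inductive step, let $\theta\geq2$. Set $u'=u[1,\pi_{\theta-1}+|t|-1]$. Then $(u',(\pi_1,\ldots,\pi_{\theta-1}))\in\widehat\Gamma(t,\theta-1)$, except that $u'$ ends in $s=u[\pi_{\theta-1},\pi_{\theta-1}+|t|-1]\in\{t,t^R\}$ rather than necessarily in $t$.

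\emph{Strengthening the statement.} Because of this, I will prove a stronger claim by induction. Consider words beginning with $t$, ending with $t$ or $t^R$, and carrying the same chain conditions. For these, the claim is $\delta(u,x)=\delta(t,x)$ when the final block is $t$, and the same value $\delta(t,x)$ as well when the final block is $t^R$. I will check this carefully, since it is the crux.

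\emph{Gluing.} Let $p=u[\pi_{\theta-1},\pi_\theta+|t|-1]$, which is a palindrome. Then $u$ is the union of $u'$ and $p$, overlapping in $s$. By (ii),
\[
\delta(u,x)=\delta(u',x)+\delta(p,x)-\delta(s,x)=\delta(u',x)-\delta(s,x).
\]
If $s=t$, then induction gives $\delta(u',x)=\delta(t,x)$, hence $\delta(u,x)=0$. That is wrong, so the bookkeeping must be redone.

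\emph{Correcting the invariant.} The correct invariant should be $\delta(u,x)=\delta(\text{last block},x)$, which I expect is the actual mechanism. Check: $\delta(u,x)=\delta(u',x)-\delta(s,x)+\delta(\text{last},x)$. Here $p$ is a palindrome with prefix $s$ and suffix equal to the last block, so the last block equals $s^R$. Then $\delta(p,x)=0$, and the true decomposition splits $u$ as $u'$ followed by the tail of $p$. Writing $p=s\,q$, with $q$ the part after $s$, the overlap term handles correctly only occurrences inside $p$.

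Reworking with the palindrome: $\delta(u,x)=\delta(u',x)+\delta(p,x)-\delta(s,x)=\delta(u',x)-\delta(s,x)$. With the invariant $\delta(u',x)=\delta(s,x)$ this gives $0$, which contradicts $\delta(u,x)=\delta(\text{last},x)=-\delta(s,x)$ unless the defect vanishes. So I need to recheck (ii).

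The main obstacle is exactly this sign bookkeeping. I expect the resolution is that the last block is determined as $s^R$ only when $|p|\geq 2|t|$. When the blocks overlap ($\pi_\theta-\pi_{\theta-1}<|t|$), the overlap length is smaller than $|t|$ and (ii) must be applied with the overlap $s\cap(\text{last block})$ instead.

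I would therefore carry out the induction with the invariant $\delta(u,x)=\delta(\text{last block},x)$. I would verify it in both regimes, gapped and overlapping, by applying (ii) to $u'$ and $p$ with the correct overlap, namely the last block of $u'$. I expect to obtain $\delta(u',x)-\delta(s,x)+0$ plus the contribution of the suffix beyond $u'$, recomputed via (i) on $p$. The final condition that $u$ ends with $t$ then yields the proposition.
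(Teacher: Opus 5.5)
Your ingredients are the right ones. Fact (ii) and the gluing identity $\delta(u,x)=\delta(u',x)+\delta(p,x)-\delta(s,x)=\delta(u',x)-\delta(s,x)$ are exactly the recurrence the paper's proof is built on. The gap is that you reject the correct evaluation of this identity and end with a false invariant, so no proof is actually produced.

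When $s=t$, the value $\delta(u,x)=0$ is not an error. The palindrome $p$ has prefix $s$, and its length-$|t|$ suffix is the last block, so the last block is $s^R=t^R$; for such words the defect really is $0$. Both your strengthened claim (value $\delta(t,x)$ also when the last block is $t^R$) and your final invariant $\delta(u,x)=\delta(\text{last block},x)$ fail. Take $t=x=ab$ and $(abbab,(1,3,4))\in\widehat\Gamma(t,3)$. The intermediate prefix $u'=abba$ has $\delta(u',x)=1-1=0$, whereas $\delta(ba,x)=-1$ and $\delta(t,x)=1$.

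The extra term $+\delta(\text{last},x)$ in your ``check'' double counts occurrences that already lie in $p$. The proposed split into gapped and overlapping regimes is a red herring. The words $u'$ and $p$ always overlap in exactly $s$, of length $|t|\geq|x|$. Also, the length-$|t|$ suffix of any palindrome is the reversal of its length-$|t|$ prefix, with no condition such as $|p|\geq 2|t|$. In the example the last two blocks overlap, and the plain recurrence gives $\delta(abbab,x)=0+0-(-1)=1$, which is correct.

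What is missing is the parity bookkeeping together with an endpoint argument. Let $w_j=u[1,\pi_j+|t|-1]$ and $s_j=u[\pi_j,\pi_j+|t|-1]$. The bridges give $s_{j+1}=s_j^R$, hence $s_j=t$ for odd $j$ and $s_j=t^R$ for even $j$. Using $\delta(t^R,x)=-\delta(t,x)$ from (i), the recurrence $\delta(w_{j+1},x)=\delta(w_j,x)-\delta(s_j,x)$ gives by induction $\delta(w_j,x)=\delta(t,x)$ for odd $j$ and $\delta(w_j,x)=0$ for even $j$.

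If $\theta$ is odd, you are done since $w_\theta=u$. If $\theta$ is even, parity gives $s_\theta=t^R$ while the suffix condition gives $s_\theta=t$. So $t$ is a palindrome, $\delta(t,x)=0$, and again $\delta(u,x)=0=\delta(t,x)$. This last step reconciles the ``$0$'' case with the statement, and it does not appear anywhere in your proposal.
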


\begin{proof}
Put \(n=\vert t\vert\), \(m=\vert x\vert\), and
\(\theta=\vert\vec\pi\vert\).  We use the given witness
\(\vec\pi=(\pi_1,\ldots,\pi_\theta)\); thus
\((u,\vec\pi)\in\widehat\Gamma(t,\theta)\).
For
\(i\in\{1,2,\ldots,\theta\}\), define
\[
    s_i=u[\pi_i,\pi_i+n-1]
    \qquad\text{and}\qquad
    w_i=u[1,\pi_i+n-1].
\]
Thus \(s_i\in\{t,t^R\}\) for every \(i\).  Since
\(\pi_1=1\), \(t\in\Prefix(u)\), and
\(\pi_\theta=\vert u\vert-n+1\), we have
\begin{equation}
\label{eq:duif9b445689e-endpoints}
    w_1=s_1=t,
    \qquad w_\theta=u,
    \qquad s_\theta=t.
\end{equation}

For \(i\in\{1,2,\ldots,\theta-1\}\), put
\[
    q_i=u[\pi_i,\pi_{i+1}+n-1].
\]
By \((u,\vec\pi)\in\widehat\Gamma(t,\theta)\), the word \(q_i\) is a
palindrome.  Its prefix of length \(n\) is \(s_i\), whereas its
suffix of length \(n\) is \(s_{i+1}\).  In a palindrome, a suffix
of a fixed length is the reversal of the prefix of the same length.
Consequently,
\begin{equation}
\label{eq:duif9b445689e-alternation}
    s_{i+1}=s_i^R
    \qquad(i\in\{1,2,\ldots,\theta-1\}).
\end{equation}
Using \eqref{eq:duif9b445689e-alternation} and \(s_1=t\), induction gives
\begin{equation}
\label{eq:duif9b445689e-si}
    s_i=\begin{cases}
        t,   &\text{if \(i\) is odd},\\
        t^R, &\text{if \(i\) is even}.
    \end{cases}
\end{equation}
This remains valid when \(t=t^R\), in which case the two displayed
values coincide.

If \(\theta=1\), the endpoint condition gives \(u=t\), and the
assertion is immediate.  We may therefore assume \(\theta\geq2\).

We next derive the occurrence-count recurrence.  Fix
\(i\in\{1,2,\ldots,\theta-1\}\) and
\(y\in\{x,x^R\}\).  In the prefix \(w_{i+1}\), the word \(w_i\)
is a prefix, the word \(q_i\) is a suffix, and their overlap is
exactly
\[
    u[\pi_i,\pi_i+n-1]=s_i.
\]
Every occurrence of \(y\) in \(w_{i+1}\) is contained in \(w_i\)
or in \(q_i\).  Indeed, an occurrence contained in neither would,
in the coordinates of \(u\), start at or before \(\pi_i-1\) and
end at or after \(\pi_i+n\).  Its length would therefore be at least
\[
    (\pi_i+n)-(\pi_i-1)+1=n+2,
\]
contrary to
\[
    \vert y\vert=m\leq n.
\]
Furthermore, an occurrence is contained in both \(w_i\) and \(q_i\)
if and only if it is contained in their common factor \(s_i\).
Hence inclusion--exclusion gives
\begin{equation}
\label{eq:duif9b445689e-occurrence-recurrence}
    \occur(w_{i+1},y)
      =\occur(w_i,y)+\occur(q_i,y)-\occur(s_i,y).
\end{equation}

For a nonempty finite word \(v\), write
\[
    D(v)=\occur(v,x)-\occur(v,x^R).
\]
If \(v\) is a palindrome, reflection of occurrence intervals in the
centre of \(v\),
\[
    (\alpha,\beta)
      \longmapsto
    (\vert v\vert+1-\beta,\vert v\vert+1-\alpha),
\]
is a bijection between the occurrences of \(x\) and those of
\(x^R\).  Thus \(D(v)=0\) for every palindrome \(v\), and in
particular
\[
    D(q_i)=0.
\]
Subtracting \eqref{eq:duif9b445689e-occurrence-recurrence} for
\(y=x^R\) from the same equality for \(y=x\) yields
\begin{equation}
\label{eq:duif9b445689e-D-recurrence}
    D(w_{i+1})=D(w_i)-D(s_i).
\end{equation}

Reversal of occurrence intervals gives
\[
    \occur(t^R,x)=\occur(t,x^R)
    \qquad\text{and}\qquad
    \occur(t^R,x^R)=\occur(t,x),
\]
and therefore
\begin{equation}
\label{eq:duif9b445689e-reversal-sign}
    D(t^R)=-D(t).
\end{equation}
We now prove by induction on \(j\in\{1,2,\ldots,\theta\}\) that
\begin{equation}
\label{eq:duif9b445689e-prefix-values}
    D(w_j)=\begin{cases}
        D(t),&\text{if \(j\) is odd},\\
        0,   &\text{if \(j\) is even}.
    \end{cases}
\end{equation}
For \(j=1\), this follows from \(w_1=t\).  Suppose that
\(j<\theta\) and that the assertion holds for \(j\).  If \(j\) is
odd, then \(s_j=t\), and
\[
    D(w_{j+1})=D(w_j)-D(s_j)=D(t)-D(t)=0.
\]
If \(j\) is even, then \(s_j=t^R\), and
\[
    D(w_{j+1})=D(w_j)-D(s_j)
      =0-D(t^R)=D(t).
\]
This proves \eqref{eq:duif9b445689e-prefix-values}.

It remains to take \(j=\theta\).  If \(\theta\) is odd, then
\eqref{eq:duif9b445689e-prefix-values} gives
\(D(w_\theta)=D(t)\).  If \(\theta\) is even, then
\eqref{eq:duif9b445689e-si} gives \(s_\theta=t^R\), whereas
\eqref{eq:duif9b445689e-endpoints} gives \(s_\theta=t\).  Hence
\(t=t^R\).  In that case \(t\) is a palindrome, so \(D(t)=0\), and
\eqref{eq:duif9b445689e-prefix-values} again gives
\(D(w_\theta)=D(t)\).  Finally, \(w_\theta=u\), and therefore
\[
    \occur(u,x)-\occur(u,x^R)
      =D(u)=D(t)
      =\occur(t,x)-\occur(t,x^R).
\]
This is the desired identity.
\end{proof}

\section{Tailing}\label{sec:tailing}

The reflection construction in this section follows occurrences of a pattern
and its reversal across a fixed palindromic factorization.  The states whose
occurrences cross a factorization boundary will provide the cuts used in
Section~\ref{sec:trimming}.

Throughout this section, fix
\[
 t\in\Sigma^+\setminus\Pal^+,
 \qquad (u,\vec\pi)\in\widehat\Gamma(t,\theta),
 \qquad \vec d\in\PalFac(u,\mu),
\]
where \(\theta,\mu\in\mathbb N_1\),
\(\vec\pi=(\pi_1,\ldots,\pi_\theta)\), and
\(\vec d=(d_1,\ldots,d_\mu)\).  Set \(d_0=0\) and
\(\ell=\vert t\vert\).  In particular,
\[
 u[d_\delta+1,d_{\delta+1}]\in\Pal^+
 \qquad(0\leq\delta<\mu).
\]
Define
\[
\begin{split}
\Delta(t,u,\vec d)=\{\,&(\gamma,\delta,z):
       1\leq\gamma\leq\vert u\vert-\ell+1,\quad z\in\{t,t^R\},\\
 &u[\gamma,\gamma+\ell-1]=z,\quad 0\leq\delta<\mu,\\
 &d_\delta+\tfrac12<\gamma+\ell-1
             <d_{\delta+1}+\tfrac12\,\},
\end{split}
\]
where \(\gamma,\delta\) are integers, and define
\[
\begin{split}
\Delta_0(t,u,\vec d,\vec\pi)
 =\{\,&(\gamma,\delta,z)\in\Delta(t,u,\vec d):\\
      &\gamma\in\{\pi_1,\ldots,\pi_\theta\}\,\}.
\end{split}
\]
The right endpoint of an occurrence determines its second coordinate
\(\delta\) uniquely: for \(e=\gamma+\ell-1\), the condition is
\(d_\delta<e\leq d_{\delta+1}\).
For the fixed data, abbreviate
\[
 \Delta=\Delta(t,u,\vec d),
 \qquad \Delta_0=\Delta_0(t,u,\vec d,\vec\pi).
\]
The data \(t,u,\vec d,\vec\pi\) are also held fixed in the maps
\(\red_1,\red_2\) and in the iteration below; their dependence is
suppressed in the notation.

Define \(\red_1:\Delta\to\Delta\) as follows.  If
\(x=(\gamma,\delta,z)\in\Delta_0\), put \(\red_1(x)=x\).
Otherwise there is a unique \(j\in\{1,\ldots,\theta-1\}\) such that
\(\pi_j<\gamma<\pi_{j+1}\).  Put
\[
 \gamma'=\pi_j+\pi_{j+1}-\gamma,
 \qquad z'=z^R,
\]
and choose the unique \(\delta'\in\{0,\ldots,\mu-1\}\) for which
\(d_{\delta'}<\gamma'+\ell-1\leq d_{\delta'+1}\).
Set \(\red_1(x)=(\gamma',\delta',z')\).
This is equivalently the reflection rule
\[
 \gamma'+\ell-1
   =\Mirror(\pi_j,\pi_{j+1}+\ell-1,\gamma).
\]
Reflection in the palindrome
\(u[\pi_j,\pi_{j+1}+\ell-1]\) gives the factor \(z^R\) at
\(\gamma'\), and \(\pi_j<\gamma'<\pi_{j+1}\), so the resulting
triple belongs to \(\Delta\setminus\Delta_0\).

Define \(\red_2:\Delta\to\Delta\) as follows.  If
\(x=(\gamma,\delta,z)\) satisfies
\(\gamma<d_\delta+\tfrac12\), put \(\red_2(x)=x\).
Otherwise put
\[
 a=d_\delta+1,\qquad b=d_{\delta+1},\qquad
 \gamma'=a+b-\gamma-\ell+1
\]
and set
\[
 \red_2(x)=(\gamma',\delta,z^R).
\]
Equivalently, \(\gamma'+\ell-1=\Mirror(a,b,\gamma)\).
In this case the occurrence at \(\gamma\) is contained in the
palindromic block \(u[a,b]\).  Its reflected occurrence remains in
the same block, with the same second coordinate \(\delta\).

Given an initial state
\(x_1=(\gamma_1,\delta_1,z_1)\in\Delta\), let
\(x_i=(\gamma_i,\delta_i,z_i)\) be defined recursively by
\[
 x_{i+1}=\begin{cases}
 x_i,&\text{if }i>1\text{ and }\gamma_i<d_{\delta_i}+\tfrac12,\\
 \red_1(x_i),&\text{otherwise, if }i\text{ is odd},\\
 \red_2(x_i),&\text{otherwise}.
 \end{cases}
\]
In every use of this recursion the initial state, as well as the fixed
data \(t,u,\vec d,\vec\pi\), is specified.

\begin{lemma}
\label{fuje9fk7d99t}
If \((\gamma_1,\delta_1,z_1)\in\Delta(t, u, \vec{d})\setminus\Delta_0(t, u, \vec{d},\vec{\pi})\),
\(\gamma_1<d_{\delta_1}+\frac{1}{2}\), then there is
\(i_0\in\mathbb{N}_1\) such that
\[
    (\gamma_i,\delta_i,z_i)
    =(\gamma_{i_0},\delta_{i_0},z_{i_0})
    \qquad\text{for every }i\geq i_0.
\]
\end{lemma}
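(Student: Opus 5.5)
The plan is to view the recursion as a walk in a finite graph of maximum degree two, in which the initial state is an endpoint. Such a walk cannot cycle forever without reaching a vertex at which the recursion freezes. Call a state \((\gamma,\delta,z)\in\Delta\) \emph{crossing} if \(\gamma<d_\delta+\tfrac12\). The hypothesis says that \(x_1\) is crossing and \(x_1\notin\Delta_0\). Since \(\Delta\) is finite, it suffices to show that the sequence reaches a crossing state at some index \(i>1\). Once \(x_i\) is crossing with \(i>1\), the first clause of the recursion gives \(x_{i+1}=x_i\), and by induction the sequence is constant from then on.

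First I would record two involution properties. (a) On \(\Delta\setminus\Delta_0\), \(\red_1\) is an involution without fixed points. It reflects in the palindrome \(u[\pi_j,\pi_{j+1}+\ell-1]\) and keeps \(\gamma'\) strictly inside \((\pi_j,\pi_{j+1})\), so the same \(j\) is used again and the reflection is undone. There are no fixed points because \(\gamma'=\gamma\) would force \(z=z^R\), contradicting \(t\notin\Pal^+\). On \(\Delta_0\), \(\red_1\) is the identity. (b) On non-crossing states, \(\red_2\) is an involution without fixed points. The reflected occurrence stays inside the block \(u[d_\delta+1,d_{\delta+1}]\), so it is again non-crossing with the same \(\delta\), and reflecting twice returns \(\gamma\). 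Fixed points are again excluded by \(z\neq z^R\). On crossing states, \(\red_2\) is the identity.

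Next, form the graph on \(\Delta\) whose edges are \(\{x,\red_1(x)\}\) for \(x\in\Delta\setminus\Delta_0\) and \(\{x,\red_2(x)\}\) for non-crossing \(x\). Every vertex has at most one edge of each type. The vertex \(x_1\) has only a \(\red_1\)-edge, since it is crossing and lies outside \(\Delta_0\). As long as no freezing occurs, the sequence \(x_1,x_2,\dots\) follows edges of alternating type, starting with a \(\red_1\)-edge. The one exception is at a state of \(\Delta_0\) reached at an odd index. There \(\red_1\) acts trivially, giving \(x_{i+1}=x_i\), and the next step applies \(\red_2\) again. By (b) this gives \(x_{i+2}=x_{i-1}\), so the walk reverses direction.

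The main step, and the one I expect to need the most care, is the bookkeeping of parities in this path argument. In a graph with alternating edge types and maximum degree two, a walk that never reuses the edge it just traversed cannot return to an earlier vertex. The only ways it can stop moving forward are at a vertex missing the next edge type: a crossing state (the walk freezes) or a \(\Delta_0\) state (the walk bounces back). In the bounce case, the reversed walk retraces the earlier states in reverse order, with the parity of the step types now shifted. It therefore eventually arrives again at \(x_1\), or at an earlier crossing state, at some index greater than \(1\), and the first clause freezes it there. In the no-bounce case, finiteness of \(\Delta\) together with the no-return property forces the walk to end at a crossing state. In both cases there is an index \(i_0>1\) with \(x_{i_0}\) crossing, which proves the lemma.

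I would verify the parity claims by a short induction: before any bounce, \(x_{i+1}=\red_1(x_i)\) for odd \(i\) and \(x_{i+1}=\red_2(x_i)\) for even \(i\). After a bounce at odd index \(i\), \(x_{i+1+k}=x_{i-k}\) for all \(k\ge0\) up to the first time a crossing state appears.
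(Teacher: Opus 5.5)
Your argument is correct, but it takes a different route from the paper.

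The paper does not analyse the shape of the orbit. After checking that \(\red_1\) and \(\red_2\) are involutions of all of \(\Delta\), it records the parity of the step in the augmented space \(\Delta\times\{1,2\}\). There the map \(F(x,1)=(\red_1(x),2)\), \(F(x,2)=(\red_2(x),1)\) is a permutation. Hence the alternating sequence without the absorbing clause eventually returns to the crossing state \(x_1\). The first index \(j\geq2\) at which that sequence meets a crossing state is where the actual recursion freezes. This needs no fixed-point-freeness, so it does not use the standing hypothesis \(t\notin\Pal^+\). It also needs no separate treatment of the states of \(\Delta_0\) fixed by \(\red_1\); the permutation absorbs them.

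Your path-component argument needs more: no loops, degree bookkeeping, and the bounce case. In return it yields more. It shows that the orbit of \(x_1\) runs along a path to one of two ends:
\begin{itemize}
\item a second crossing endpoint, reached at an even index and carrying the reversed label;
\item a non-crossing state of \(\Delta_0\), reached at an odd index, after which the orbit retraces the path back to \(x_1\).
\end{itemize}
The paper re-derives exactly this structure later: in the claim that \(j_0\) is odd in Proposition~\ref{ooid9873bw3w}, and in the pairing \(\Phi\) with even \(\tau\) in Lemma~\ref{uid9eurnn3n}.

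One phrasing point. Your no-return property, stated for an arbitrary graph of maximum degree two, is false on cycle components. A non-backtracking walk can also return through a pair of states joined by both a \(\red_1\)-edge and a \(\red_2\)-edge. What you actually need is that \(x_1\) has degree one, so its component is a path with no parallel edges. You note this at the outset, and the main step should invoke it explicitly rather than the general statement.
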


\begin{proof}
Use the fixed data and the abbreviations \(\Delta,\Delta_0\)
introduced above, and write \(x_i=(\gamma_i,\delta_i,z_i)\).
Define
\[
    \mathcal C
    =\{(\gamma,\delta,z)\in\Delta
       \mid \gamma<d_\delta+\tfrac12\}.
\]
Thus \(x_1\in\mathcal C\).  By the first clause in the definition of
our iteration, once \(x_j\in\mathcal C\) for some \(j>1\), we have
\(x_{j+1}=x_j\).  The same clause then applies at every subsequent
index, and therefore
\begin{equation}
\label{eq:fuje9fk7d99t-absorbing}
    x_i=x_j\qquad(i\geq j).
\end{equation}
It remains to prove that the sequence reaches \(\mathcal C\) at an
index larger than \(1\).

We first prove that \(\red_1\) and \(\red_2\) are involutions of the
finite set \(\Delta\).  Put
\[
    \ell=\vert t\vert.
\]
The map \(\red_1\) fixes every element of \(\Delta_0\).  Now let
\[
    x=(\gamma,\delta,z)\in\Delta\setminus\Delta_0
    \qquad\text{and}\qquad
    e=\gamma+\ell-1.
\]
If \(\theta=1\), then \(u=t\), every occurrence start in
\(\Delta\) equals \(\pi_1=1\), and \(\Delta=\Delta_0\).
Thus the lemma has no admissible initial state in that case.  In the
case under consideration \(\theta\geq2\).  Since
\[
    1=\pi_1<\pi_2<\cdots<\pi_\theta
      =\vert u\vert-\ell+1
\]
and \(1\leq\gamma\leq\vert u\vert-\ell+1\), there is a unique
\(j\in\{1,2,\ldots,\theta-1\}\) such that
\[
    \pi_j<\gamma<\pi_{j+1}.
\]
Set
\[
    a=\pi_j
    \qquad\text{and}\qquad
    b=\pi_{j+1}+\ell-1.
\]
Then \([\gamma,e]\subseteq[a,b]\), and the given witness
\((u,\vec\pi)\in\widehat\Gamma(t,\theta)\) gives
\[
    u[a,b]
      =u[\pi_j,\pi_{j+1}+\ell-1]\in\Pal^+.
\]
Reflection in this palindrome sends \([\gamma,e]\) to
\[
    [\gamma',e']=[a+b-e,\,a+b-\gamma].
\]
This is exactly the interval used in the definition of \(\red_1(x)\):
indeed, \(e'=\Mirror(a,b,\gamma)\) and
\(\gamma'=e'-\ell+1\).  Moreover,
\begin{equation}
\label{eq:fuje9fk7d99t-red1-coordinate}
    \gamma'
      =a+b-e
      =\pi_j+\pi_{j+1}-\gamma.
\end{equation}
The strict inequalities \(\pi_j<\gamma<\pi_{j+1}\) and
\eqref{eq:fuje9fk7d99t-red1-coordinate} imply
\[
    \pi_j<\gamma'<\pi_{j+1}.
\]
Hence \(\red_1(x)\notin\Delta_0\), and the same index \(j\) is used
when \(\red_1\) is applied a second time.  A second reflection in the
same palindrome returns \([\gamma,e]\), while
\((z^R)^R=z\).  Since the second coordinate \(\delta\) of an element of
\(\Delta\) is uniquely determined by its right endpoint, it also
returns to its original value.  Thus
\[
    \red_1^2(x)=x
    \qquad(x\in\Delta\setminus\Delta_0).
\]
Together with the identity action on \(\Delta_0\), this proves
\begin{equation}
\label{eq:fuje9fk7d99t-red1-involution}
    \red_1^2=\operatorname{id}_{\Delta}.
\end{equation}

We next consider \(\red_2\).  By definition, it fixes every element of
\(\mathcal C\).  Let
\[
    x=(\gamma,\delta,z)\in\Delta\setminus\mathcal C,
    \qquad e=\gamma+\ell-1.
\]
The definition of \(\Delta\) and the integrality of \(e\) give
\[
    d_\delta+1\leq e\leq d_{\delta+1}.
\]
Since \(x\notin\mathcal C\), the integrality of \(\gamma\) gives
\(\gamma\geq d_\delta+1\).  Consequently,
\[
    [\gamma,e]\subseteq[d_\delta+1,d_{\delta+1}].
\]
Put \(a=d_\delta+1\) and \(b=d_{\delta+1}\).  The word \(u[a,b]\)
is a palindrome by the assumed palindromic factorization of \(u\).
Reflection in this palindrome sends \([\gamma,e]\) to
\[
    [\gamma',e']=[a+b-e,\,a+b-\gamma].
\]
In particular,
\[
    a\leq\gamma'\leq e'\leq b,
\]
so \(\gamma'\geq d_\delta+1>d_\delta+\tfrac12\).  Thus
\(\red_2(x)\in\Delta\setminus\mathcal C\), with the same value of
\(\delta\).  Reflecting once more in \(u[a,b]\) gives
\[
    a+b-e'=\gamma,
    \qquad
    a+b-\gamma'=e.
\]
Together with \((z^R)^R=z\), this proves
\begin{equation}
\label{eq:fuje9fk7d99t-red2-involution}
    \red_2^2=\operatorname{id}_{\Delta}.
\end{equation}
Therefore both reductions are bijections of \(\Delta\).

Consider the finite augmented state space
\[
    \Omega=\Delta\times\{1,2\}
\]
and define \(F:\Omega\to\Omega\) by
\[
    F(x,1)=(\red_1(x),2),
    \qquad
    F(x,2)=(\red_2(x),1).
\]
Equations \eqref{eq:fuje9fk7d99t-red1-involution} and
\eqref{eq:fuje9fk7d99t-red2-involution} show that \(F\) is a
bijection; explicitly,
\[
    F^{-1}(x,1)=(\red_2(x),2),
    \qquad
    F^{-1}(x,2)=(\red_1(x),1).
\]
Since \(\Omega\) is finite, the orbit of \((x_1,1)\) under the
permutation \(F\) returns to its initial state.  Hence there is
\(q\in\mathbb N_1\) such that
\begin{equation}
\label{eq:fuje9fk7d99t-return}
    F^q(x_1,1)=(x_1,1).
\end{equation}
The second coordinate changes at every application of \(F\), so
\(q\) is even.

Let \((y_i)_{i\geq1}\) be the auxiliary sequence obtained by
alternating \(\red_1\) and \(\red_2\) without the absorbing first
clause.  Thus \(y_1=x_1\), and the augmented state corresponding to
\(y_i\) is \(F^{i-1}(x_1,1)\).  Equation
\eqref{eq:fuje9fk7d99t-return} gives
\[
    y_{q+1}=x_1\in\mathcal C.
\]
Therefore the set
\[
    \{i\in\{2,3,\ldots,q+1\}\mid y_i\in\mathcal C\}
\]
is nonempty.  Let \(j\) be its least element.  For every
\(i\in\{2,\ldots,j-1\}\), we have \(y_i\notin\mathcal C\).  At
\(i=1\), the absorbing clause is excluded by the condition \(i>1\),
and at every subsequent step before \(j\), it is excluded because the
current state is not in \(\mathcal C\).  It follows inductively that
\[
    x_i=y_i\qquad(1\leq i\leq j).
\]
In particular, \(x_j=y_j\in\mathcal C\) and \(j>1\).  Applying
\eqref{eq:fuje9fk7d99t-absorbing} and taking \(i_0=j\), we obtain
\[
    x_i=x_{i_0}\qquad(i\geq i_0).
\]
This is exactly the asserted eventual constancy.
\end{proof}

Given \(z\in\{t,t^R\}\), define
\[
\begin{split}
\Delta_T(t,u,\vec d,\vec\pi,z)=\{\,&(\gamma,\delta,z')\in
                       \Delta:\\
 &z'=z,\quad \gamma<d_\delta+\tfrac12\,\}.
\end{split}
\]
For each \(x\in\Delta\), let \((x_j^{(x)})_{j\geq1}\) denote the
iteration above with \(x_1^{(x)}=x\), using the same fixed data.
Define
\[
\begin{split}
\Delta_S(t,u,\vec d,\vec\pi,z)=\{\,&x\in
                \Delta_T(t,u,\vec d,\vec\pi,z):\\
 &x_j^{(x)}\in\Delta_0\text{ for some }j\geq1\,\}.
\end{split}
\]
Thus membership in \(\Delta_S\) refers to the orbit starting at the
particular element being tested, rather than to one common orbit.

For any occurrence \(u[\gamma,\gamma+\vert z\vert-1]=z\) of a
nonempty word \(z\) in a finite word \(u\), define
\[
 \cut(u,\gamma,z)
   =\bigl(u[1,\gamma+\vert z\vert-1],\ u[\gamma,\vert u\vert]\bigr).
\]
The position \(\gamma\) is part of the input: different occurrences of
\(z\) need not give the same pair.  This operation returns two words,
not palindromic-factorization vectors.  The two displayed intervals
overlap exactly in the chosen occurrence of \(z\).

\begin{proposition}
\label{ooid9873bw3w}
Suppose
\[
 x_1=(\gamma_1,\delta_1,z_1)
      \in\Delta_S(t,u,\vec d,\vec\pi,t),
\]
and let
\[
 (v_1,v_2)=\cut(u,\gamma_1,z_1).
\]
Then
\[
 v_1\in\Gamma(t)\qquad\text{and}\qquad v_2\in\Gamma(t).
\]
\end{proposition}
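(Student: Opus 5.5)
The plan is to turn the orbit of \(x_1\) into an explicit chain of occurrences of \(t\) and \(t^R\) linked by palindromes, and then splice it with the given chain \(\vec\pi\) at the place where the orbit meets \(\Delta_0\). The splicing itself is straightforward. The hard part is making the spliced chain increasing and keeping it inside the relevant part of \(u\); I do not yet have a construction for this.

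\emph{Step 1: the easy case.} Suppose \(x_1\in\Delta_0\), that is, \(\gamma_1=\pi_k\) for some \(k\), and recall \(z_1=t\). For \(v_1=u[1,\pi_k+\ell-1]\) take the witness \((\pi_1,\ldots,\pi_k)\). Then \(t\) is a prefix of \(v_1\), since \(\pi_1=1\), and \(t\) is a suffix of \(v_1\), since \(u[\gamma_1,\gamma_1+\ell-1]=z_1=t\). The linking palindromes \(u[\pi_i,\pi_{i+1}+\ell-1]\) for \(i<k\) all lie inside \(v_1\). For \(v_2=u[\pi_k,|u|]\) take the shifted witness \((\pi_k-\pi_k+1,\ldots,\pi_\theta-\pi_k+1)\). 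Its first block is \(t\), its last is \(t\) by the endpoint condition of \(\widehat\Gamma\), and its links are again the palindromes \(u[\pi_i,\pi_{i+1}+\ell-1]\).

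\emph{Step 2: links from reflections.} In general, write \(x_1,x_2,\ldots,x_s\) for the states of the orbit up to the first index \(s\) with \(x_s\in\Delta_0\); such an \(s\) exists because \(x_1\in\Delta_S\). Each step reflects an occurrence \([\gamma,e]\) into \([\gamma',e']\) inside a palindrome \(u[a,b]\), which is either a \(\Gamma\)-link or a factorization block. Both intervals are symmetric about the centre of \(u[a,b]\). Hence \(u[\min(\gamma,\gamma'),\max(e,e')]\) has the same centre as \(u[a,b]\) and lies inside it, so it is a palindrome. This is exactly the link condition in the definition of \(\widehat\Gamma\) between the occurrences at \(\gamma\) and \(\gamma'\). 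Stationary steps are simply deleted. So consecutive distinct states of the orbit give pairwise linked occurrences of \(t\) or \(t^R\), and the last one starts at some \(\pi_k\). Concatenating with the \(\vec\pi\)-chain as in Step 1 then connects position \(1\) to \(\gamma_1\), and \(\gamma_1\) to \(|u|-\ell+1\), by sequences of palindromic links.

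\emph{Step 3: the obstacle.} These sequences are not monotone, and some links may stick out of \(v_1\) or \(v_2\). The definition of \(\Gamma\) requires strictly increasing starts \(1=\pi_1'<\cdots\), and every link must lie within the word being certified. I would first try a minimal-counterexample reduction on link sequences.

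\begin{itemize}
\item Take a linked path from \(1\) to \(\gamma_1\) with minimal total length of its links.
\item At a turning point \(p<q>r\), both links end at the same position \(q+\ell-1\), so both palindromes are suffixes ending there. The shorter is then a suffix of the longer, and reflecting it inside the longer one should replace the turn by a shorter link. The analogous argument handles turns \(p>q<r\), where the links share a left endpoint.
\end{itemize}

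I am not yet sure this reflection produces an occurrence at the right position: the naive version yields a palindromic prefix of \([p,q+\ell-1]\) rather than a link ending at \(r+\ell-1\). A careful argument will have to use the \(t/t^R\) alternation established in the proof of Proposition~\ref{duif9b445689e}.

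A second point I expect to need is a boundary-crossing property: the hypothesis \(\gamma_1<d_{\delta_1}+\frac12\) means that the cut position is not inside a single factorization block, and I would use it to show that the reduced chain for \(v_1\) stays left of \(\gamma_1+\ell-1\) and the chain for \(v_2\) stays right of \(\gamma_1\). Justifying this confinement is the step I expect to be the main difficulty.
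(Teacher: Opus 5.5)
Your outline matches the paper's: Case 1 by restricting $\vec\pi$; otherwise turn the orbit, up to its first visit $\gamma_{j_0}=\pi_r$ of $\Delta_0$, into bridges, splice it with $\pi_1,\ldots,\pi_r$ (resp.\ $\pi_{r+1},\ldots,\pi_\theta$), and straighten. There is, however, a genuine gap: the straightening step carries the whole weight of the nontrivial case, it is not proved, and your Step~3 stalls exactly where the key idea is needed.

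Take a peak with bridges $u[p,q+\ell-1]$ and $u[r,q+\ell-1]$, say $p<r<q$. The palindromic prefix your reflection produces is not a defect; it is exactly the first bridge you want. Reflect the occurrence at $r$ inside $u[p,q+\ell-1]$ to an occurrence at $q'=p+q-r$.
\begin{itemize}
\item The image $u[p,q'+\ell-1]$ of $u[r,q+\ell-1]$ gives $p\sim q'$.
\item The smallest interval containing the occurrences at $q'$ and $r$ is symmetric about the centre of $u[p,q+\ell-1]$, hence a palindrome. This gives $q'\sim r$, by the same observation you already used in Step~2.
\end{itemize}
So the peak is replaced by two bridges through a lower middle point, not by one shorter link. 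If still $q'>r$, repeat. The middle point drops by $r-p$ each time and stays above $p$. It can never equal $r$, since that occurrence would then carry both $w$ and $w^R$, which is impossible as $t\neq t^R$. The final middle point lies strictly between $p$ and $r$, so $\sum_h|r_{h+1}-r_h|$ strictly decreases, contradicting minimality. The remaining configurations are handled similarly:
\begin{itemize}
\item the case $r<p<q$ follows by reversing the three-term subchain;
\item valleys $p>q<r$ are treated the same way, reflecting inside the longer of the bridges $u[q,p+\ell-1]$ and $u[q,r+\ell-1]$, which share their left end;
\item a backtrack $p,q,p$ is simply deleted.
\end{itemize}

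Second, the confinement you single out as the main difficulty is not a separate issue, and the crossing hypothesis $\gamma_1<d_{\delta_1}+\tfrac12$ plays no role in it. Every reflection in the straightening happens inside a palindromic factor of $u$, so all new occurrences and bridges are genuine factors of $u$. Once the chain $1=q_1<\cdots<q_a=\gamma_1$ is strictly increasing, every bridge $u[q_h,q_{h+1}+\ell-1]$ automatically lies inside $u[1,\gamma_1+\ell-1]=v_1$. Likewise, the strictly increasing chain from $\gamma_1$ to $\pi_\theta=|u|-\ell+1$, shifted by $\gamma_1-1$, is a witness for $v_2$. The endpoint occurrences carry $t$: positions $1$ and $\pi_\theta$ because $t$ is a prefix and suffix of $u$, and $\gamma_1$ because $z_1=t$. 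The endpoints are distinct because $x_1\notin\Delta_0$. So what is missing is precisely the straightening lemma; once it is supplied, your Steps~1--2 complete the proof along the same lines as the paper.
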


\begin{proof}
Use the fixed data \(t,u,\vec d,\vec\pi\) and the abbreviations
\(\Delta,\Delta_0\) from this section.  Put \(\ell=\vert t\vert\),
\(\theta=\vert\vec\pi\vert\), and let
\(x_i=(\gamma_i,\delta_i,z_i)\) denote the iteration starting at
\(x_1\).  By the definitions of \(\Delta_S\) and \(\Delta_T\),
\[
 z_1=t,\qquad \gamma_1<d_{\delta_1}+\tfrac12.
\]
In particular,
\[
 v_1=u[1,\gamma_1+\ell-1],\qquad
 v_2=u[\gamma_1,\vert u\vert].
\]
Membership in \(\Delta_S\) provides a visit to \(\Delta_0\), so
\[
 j_0=\min\{j\in\mathbb N_1:x_j\in\Delta_0\}
\]
is well-defined.  The definition of \(\Delta_T\) allows
\(x_1\in\Delta_0\); hence \(j_0=1\) must be treated separately.

\medskip
\noindent\textbf{Case 1: \(j_0=1\).}
There is a unique \(r\in\{1,\ldots,\theta\}\) such that
\(\gamma_1=\pi_r\).  Since \(z_1=t\), the distinguished occurrence
at \(\pi_r\) is \(t\), not \(t^R\).  Thus
\[
 v_1=u[1,\pi_r+\ell-1],\qquad
 v_2=u[\pi_r,\vert u\vert].
\]
Define
\[
 \vec\pi^{(1)}=(\pi_1,\ldots,\pi_r),\qquad
 \vec\pi^{(2)}=(\pi_j-\pi_r+1)_{j=r}^{\theta}.
\]
The vector \(\vec\pi^{(1)}\) is strictly increasing, begins at \(1\),
and ends at \(\pi_r=\vert v_1\vert-\ell+1\).  Each of its
occurrences carries \(t\) or \(t^R\), and for \(1\leq j<r\),
\[
 v_1[\pi_j,\pi_{j+1}+\ell-1]
   =u[\pi_j,\pi_{j+1}+\ell-1]\in\Pal^+.
\]
The first and last occurrences both carry \(t\).  Consequently,
\[
 (v_1,\vec\pi^{(1)})\in\widehat\Gamma(t,r).
\]
For the second vector, put \(q_j=\pi_j-\pi_r+1\)
for \(r\leq j\leq\theta\).  These positions are strictly increasing,
with
\[
 q_r=1,\qquad q_\theta=\vert v_2\vert-\ell+1.
\]
For every \(r\leq j\leq\theta\),
\[
 v_2[q_j,q_j+\ell-1]=u[\pi_j,\pi_j+\ell-1]\in\{t,t^R\},
\]
and for \(r\leq j<\theta\),
\[
 v_2[q_j,q_{j+1}+\ell-1]
   =u[\pi_j,\pi_{j+1}+\ell-1]\in\Pal^+.
\]
Again the first and last occurrences both carry \(t\), so
\[
 (v_2,\vec\pi^{(2)})\in\widehat\Gamma(t,\theta-r+1).
\]
Projecting these two memberships to \(\Gamma(t)\) proves both
conclusions in this case.  The endpoint cases are included: if
\(r=1\), then \(v_1=t\) has witness \((1)\); if \(r=\theta\),
then \(v_2=t\) has witness \((1)\).  When \(\theta=1\), both
words are \(t\), and the bridge conditions are vacuous.

\medskip
\noindent\textbf{Case 2: \(j_0>1\).}
Now \(x_1\notin\Delta_0\).  Thus the hypothesis of
Lemma~\ref{fuje9fk7d99t} is satisfied.  The visit to \(\Delta_0\)
is supplied by membership in \(\Delta_S\), not by eventual constancy
alone.  We have \(j_0\geq2\) and \(x_i\notin\Delta_0\) for all
\(1\leq i<j_0\).

We first record the structure of the finite orbit segment
\[
    x_1,x_2,\ldots,x_{j_0}.
\]
For every \(i\in\{2,\ldots,j_0-1\}\), one has
\begin{equation}
\label{eq:ooid9873bw3w-no-absorption}
    \gamma_i\geq d_{\delta_i}+\frac12.
\end{equation}
Indeed, if the reverse strict inequality held, then the first clause in
the definition of the iteration would give \(x_{i+1}=x_i\).  The same
clause would then apply at every later index, so the sequence would be
constant outside \(\Delta_0\), contrary to the definition of \(j_0\).

The map \(\red_1\) sends \(\Delta\setminus\Delta_0\) into itself.  In
fact, if
\[
    \pi_k<\gamma<\pi_{k+1},
\]
then reflection in the palindrome
\[
    u[\pi_k,\pi_{k+1}+\ell-1]
\]
sends the occurrence beginning at \(\gamma\) to the occurrence
beginning at
\[
    \gamma'=\pi_k+\pi_{k+1}-\gamma.
\]
Consequently,
\[
    \pi_k<\gamma'<\pi_{k+1},
\]
so the image is not in \(\Delta_0\).  Hence the first entrance into
\(\Delta_0\) cannot occur after an application of \(\red_1\).  Since
\(\red_1\) is applied when the transition index is odd, \(j_0-1\) is
even, and therefore
\begin{equation}
\label{eq:ooid9873bw3w-j0-odd}
    j_0\text{ is odd}.
\end{equation}

Every transition before \(j_0\) is a genuine reflection.  At an odd
transition index this follows from \(x_i\notin\Delta_0\); at an even
transition index it follows from
\eqref{eq:ooid9873bw3w-no-absorption}.  Both kinds of reflection replace
the carried word by its reversal.  Since \(z_1=t\), induction gives
\begin{equation}
\label{eq:ooid9873bw3w-alternating-z}
    z_i=
    \begin{cases}
        t,   &\text{if \(i\) is odd},\\
        t^R, &\text{if \(i\) is even}
    \end{cases}
    \qquad(1\leq i\leq j_0).
\end{equation}
In particular, by \eqref{eq:ooid9873bw3w-j0-odd},
\begin{equation}
\label{eq:ooid9873bw3w-last-z}
    z_{j_0}=t.
\end{equation}

For two distinct starting positions \(a,b\) of occurrences of words in
\(\{t,t^R\}\), write
\[
    a\sim b
\]
if
\begin{equation}
\label{eq:ooid9873bw3w-bridge-relation}
    u[\min\{a,b\},\max\{a,b\}+\ell-1]\in\Pal^+.
\end{equation}
This relation is symmetric.  Whenever \(a\sim b\), the two endpoint
occurrences carry mutually reversed words, since they are the prefix
and suffix of length \(\ell\) of the displayed palindrome.  As
\(t\neq t^R\), every chain of such bridges alternates between the two
words.

If one occurrence is obtained from the other by reflection in a
palindrome, then the smallest interval containing the two occurrences
is invariant under that reflection.  Its factor is therefore a
palindrome.  It follows from the definitions of \(\red_1\) and
\(\red_2\) that
\begin{equation}
\label{eq:ooid9873bw3w-orbit-bridges}
    \gamma_i\sim\gamma_{i+1}
    \qquad(1\leq i<j_0).
\end{equation}
The two positions in each relation are distinct.  Otherwise the same
occurrence would carry both \(z_i\) and \(z_i^R\), which is impossible
because \(t\notin\Pal^+\).

We next prove a straightening claim.  Call a finite sequence
\[
    r_0,r_1,\ldots,r_s
\]
of starting positions an \emph{admissible chain} if every position
carries an occurrence of \(t\) or \(t^R\), consecutive positions are
distinct, and
\[
    r_h\sim r_{h+1}
    \qquad(0\leq h<s).
\]
We claim that every admissible chain with distinct endpoints can be
replaced by a strictly monotone admissible chain with the same
endpoints.

To prove the claim, choose among all admissible chains with the given
endpoints one for which
\begin{equation}
\label{eq:ooid9873bw3w-total-variation}
    \sum_{h=0}^{s-1}\vert r_{h+1}-r_h\vert
\end{equation}
is minimal.  Such a chain exists by the well-ordering principle.  If it
is not strictly monotone, it contains three consecutive positions
\(a,b,c\) for which \(b\) does not lie strictly between \(a\) and
\(c\).

If \(a=c\), replace the subchain \(a,b,a\) by the single position
\(a\).  This preserves admissibility and decreases
\eqref{eq:ooid9873bw3w-total-variation}.  Suppose now that \(a\neq c\).
After interchanging \(a\) and \(c\) if necessary, we may assume
\(a<c\).  The occurrences at \(a\) and \(c\) carry the same word,
say \(w\in\{t,t^R\}\), while the occurrence at \(b\) carries
\(w^R\).

First suppose that \(a<c<b\), and put
\[
    b'=a+b-c.
\]
Reflection in the palindrome \(u[a,b+\ell-1]\) sends the occurrence
at \(c\) to an occurrence of \(w^R\) beginning at \(b'\).  Under the
same reflection, the palindrome \(u[c,b+\ell-1]\) is sent to
\(u[a,b'+\ell-1]\), so \(a\sim b'\).  Moreover, the smallest interval
containing the occurrences at \(b'\) and \(c\) is invariant under the
reflection, and hence \(b'\sim c\).  If \(b'>c\), repeat the same
construction.  At each repetition the middle position decreases by
the positive integer \(c-a\).  Eventually it is at most \(c\), while
it always remains greater than \(a\).  Equality with \(c\) is
impossible, since the occurrence there would carry both \(w\) and
\(w^R\).  Thus we obtain a position \(b^*\) with
\[
    a<b^*<c,
    \qquad a\sim b^*,
    \qquad b^*\sim c,
\]
and the occurrence at \(b^*\) carries \(w^R\).

For completeness, if \(b<a<c\), put \(b'=b+c-a\).  Reflection in
\(u[b,c+\ell-1]\) sends the occurrence at \(a\) to an occurrence of
\(w^R\) at \(b'\).  It sends the palindromic factor
\(u[b,a+\ell-1]\) to \(u[b',c+\ell-1]\), proving \(b'\sim c\).
The smallest interval containing the occurrences at \(a\) and \(b'\)
is invariant under the same reflection, so \(a\sim b'\).
If \(b'<a\), repeat this construction.  Each repetition increases the
middle position by \(c-a>0\), and the position always remains below
\(c\).  Equality with \(a\) is impossible because \(w\neq w^R\).
Thus this case also gives a position \(b^*\) strictly between \(a\)
and \(c\), with the required two bridges.  All new occurrences and
bridges lie within the interval of the palindrome used for that
reflection, so they remain factors of \(u\), even when the original
occurrences overlap.

Replacing \(b\) by \(b^*\) changes the contribution
\[
    \vert a-b\vert+\vert b-c\vert
\]
to
\[
    \vert a-b^*\vert+\vert b^*-c\vert=\vert a-c\vert,
\]
which is strictly smaller because \(b\) lies outside the interval with
endpoints \(a\) and \(c\).  This again contradicts the minimality of
\eqref{eq:ooid9873bw3w-total-variation}.  The straightening claim is
proved.

Since \(x_{j_0}\in\Delta_0\), there is an
\(r\in\{1,2,\ldots,\theta\}\) such that
\begin{equation}
\label{eq:ooid9873bw3w-pir}
    \gamma_{j_0}=\pi_r.
\end{equation}
By \eqref{eq:ooid9873bw3w-last-z}, the occurrence at \(\pi_r\) is
\(t\).

We first prove that \(v_1\in\Gamma(t)\).  Concatenate the original
chain up to \(\pi_r\) with the reversed orbit chain, omitting the
duplicate occurrence at \(\pi_r\):
\begin{equation}
\label{eq:ooid9873bw3w-left-chain}
    \pi_1,\pi_2,\ldots,\pi_r,
    \gamma_{j_0-1},\gamma_{j_0-2},\ldots,\gamma_1.
\end{equation}
This is an admissible chain.  Indeed, the first part has palindromic
bridges by \((u,\vec\pi)\in\widehat\Gamma(t,\theta)\), and the
second part has
palindromic bridges by
\eqref{eq:ooid9873bw3w-orbit-bridges}; the factors carried by the
positions agree at the junction by
\eqref{eq:ooid9873bw3w-last-z}.  Its endpoints are \(1=\pi_1\) and
\(\gamma_1\), and both endpoint occurrences are \(t\).  Since
\(x_1\notin\Delta_0\), we have \(\gamma_1\neq\pi_1=1\), and hence
\(1<\gamma_1\).  By the straightening claim,
\eqref{eq:ooid9873bw3w-left-chain} can be replaced by a strictly
increasing admissible chain
\[
    1=q_1<q_2<\cdots<q_a=\gamma_1.
\]
For every \(h\), the factor beginning at \(q_h\) is \(t\) or \(t^R\),
and for every \(h<a\),
\[
    u[q_h,q_{h+1}+\ell-1]\in\Pal^+.
\]
Furthermore, the first and last occurrences are both \(t\).  Since
\[
    v_1=u[1,\gamma_1+\ell-1]
    \qquad\text{and}\qquad
    q_a=\vert v_1\vert-\ell+1,
\]
putting \(\vec\pi^{(1)}=(q_1,\ldots,q_a)\) gives
\[
 (v_1,\vec\pi^{(1)})\in\widehat\Gamma(t,a).
\]
Indeed, all occurrence intervals and bridge intervals lie between
\(1\) and \(\gamma_1+\ell-1=\vert v_1\vert\); the endpoint
occurrences show that \(t\) is both a prefix and a suffix of \(v_1\).
Projecting to \(\Gamma(t,a)\subseteq\Gamma(t)\), we obtain
\begin{equation}
\label{eq:ooid9873bw3w-v1}
    v_1\in\Gamma(t).
\end{equation}

We now prove that \(v_2\in\Gamma(t)\).  Concatenate the orbit chain
with the remaining part of the original chain, again omitting the
duplicate occurrence at \(\pi_r\):
\begin{equation}
\label{eq:ooid9873bw3w-right-chain}
    \gamma_1,\gamma_2,\ldots,\gamma_{j_0},
    \pi_{r+1},\pi_{r+2},\ldots,\pi_\theta.
\end{equation}
This is also an admissible chain.  Its endpoints are \(\gamma_1\) and
\[
    \pi_\theta=\vert u\vert-\ell+1,
\]
and both endpoint occurrences are \(t\).  Since
\(x_1\notin\Delta_0\), one has \(\gamma_1\neq\pi_\theta\); because
\(\gamma_1\) is an occurrence start in \(u\), it follows that
\(\gamma_1<\pi_\theta\).  The straightening claim therefore gives a
strictly increasing admissible chain
\[
    \gamma_1=r_1<r_2<\cdots<r_b=\pi_\theta.
\]
Put
\[
    r_h'=r_h-\gamma_1+1
    \qquad(h\in\{1,2,\ldots,b\}).
\]
Then
\[
    r_1'=1
    \qquad\text{and}\qquad
    r_b'=\vert v_2\vert-\ell+1.
\]
Moreover,
\[
    v_2[r_h',r_h'+\ell-1]
       =u[r_h,r_h+\ell-1]\in\{t,t^R\},
\]
and, for \(h<b\),
\[
    v_2[r_h',r_{h+1}'+\ell-1]
       =u[r_h,r_{h+1}+\ell-1]\in\Pal^+.
\]
The first and last occurrences are both \(t\).  Thus, with
\(\vec\pi^{(2)}=(r_1',\ldots,r_b')\),
\[
 (v_2,\vec\pi^{(2)})\in\widehat\Gamma(t,b).
\]
Projecting to \(\Gamma(t,b)\subseteq\Gamma(t)\) gives
\begin{equation}
\label{eq:ooid9873bw3w-v2}
    v_2\in\Gamma(t).
\end{equation}
Equations \eqref{eq:ooid9873bw3w-v1} and
\eqref{eq:ooid9873bw3w-v2} complete Case~2 and the proof.
\end{proof}

\begin{lemma}
\label{uid9eurnn3n}
If \(t\in\Sigma^+\setminus\Pal^+\),
\((u,\vec\pi)\in\widehat\Gamma(t)\), and
\(\vec d\in\PalFac(u)\), then
\[
 \Delta_S(t,u,\vec d,\vec\pi,t)\neq\emptyset.
\]
\end{lemma}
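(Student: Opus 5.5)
The plan is a parity count on the graph formed by the two involutions. I use the facts established in the proof of Lemma~\ref{fuje9fk7d99t}: \(\red_1\) and \(\red_2\) are involutions of the finite set \(\Delta\). Let \(\mathcal C=\{(\gamma,\delta,z)\in\Delta:\gamma<d_\delta+\tfrac12\}\).

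\textbf{Step 1 (fixed points).} I first check that \(\red_1\) fixes exactly \(\Delta_0\). For \(x\notin\Delta_0\), the reflected occurrence carries \(z^R\neq z\), since \(t\notin\Pal^+\). Hence \(\red_1(x)\neq x\). Likewise, \(\red_2\) fixes exactly \(\mathcal C\).

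\textbf{Step 2 (the graph).} Form the graph \(G\) on \(\Delta\) in which \(x\) is joined to \(\red_1(x)\) and to \(\red_2(x)\) whenever these differ from \(x\). Every vertex has degree at most \(2\), with at most one edge of each type, so each component of \(G\) is a cycle or a path. Edges alternate in type along a path. The endpoints of a nontrivial path are exactly the vertices missing one edge type, and are classified as follows.
\begin{itemize}
\item A vertex missing its \(\red_1\)-edge lies in \(\Delta_0\).
\item A vertex missing its \(\red_2\)-edge lies in \(\mathcal C\).
\item An isolated vertex lies in \(\Delta_0\cap\mathcal C\).
\end{itemize}
Every edge reverses the carried word \(z\), because both reflections replace \(z\) by \(z^R\).

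\textbf{Step 3 (parity of paths).} Consider a path with both ends in \(\Delta_0\). It starts and ends with a \(\red_2\)-edge, so it has an odd number of edges. Its two ends therefore carry \(t\) and \(t^R\) respectively.

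Now consider a path with one end in \(\Delta_0\) and the other in \(\mathcal C\). It starts with a \(\red_2\)-edge and ends with a \(\red_1\)-edge, so it has an even number of edges. Its two ends therefore carry the same word.

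\textbf{Step 4 (counting).} The elements of \(\Delta_0\) are the occurrences at \(\pi_1,\dots,\pi_\theta\). As in the proof of Proposition~\ref{duif9b445689e}, they alternate \(t,t^R,t,\dots\). Since \(t\neq t^R\) and \(s_\theta=t\), \(\theta\) is odd. So the number of \(t\)'s in \(\Delta_0\) exceeds the number of \(t^R\)'s by exactly one.

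Paths with both ends in \(\Delta_0\) contribute zero to this excess, by Step 3. Hence the surplus must come from one of two sources:
\begin{itemize}
\item an isolated vertex in \(\Delta_0\cap\mathcal C\) carrying \(t\), or
\item a \(\Delta_0\)–\(\mathcal C\) path whose \(\Delta_0\)-end carries \(t\).
\end{itemize}
In the second case, by Step 3 the \(\mathcal C\)-end also carries \(t\).

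\textbf{Step 5 (conclusion).} Take \(x\) to be the isolated vertex in the first case, or the \(\mathcal C\)-end in the second. Then \(x\in\Delta_T(t,u,\vec d,\vec\pi,t)\).

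In the isolated case, \(x_1^{(x)}=x\in\Delta_0\) already.

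In the path case, the iteration starting at \(x\) first applies \(\red_1\), since \(i=1\) is odd and the absorbing clause is excluded. It then alternates \(\red_2,\red_1,\dots\). Interior vertices of the path have degree \(2\), so they lie neither in \(\mathcal C\) nor in \(\Delta_0\). The absorbing clause is therefore never triggered before the walk reaches the other end, which lies in \(\Delta_0\). Hence \(x\in\Delta_S\).

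\textbf{Main obstacle.} The delicate step is Step 5: matching the iteration's parity convention (\(\red_1\) at odd indices, absorption only from index \(2\) on) with the walk along the path. I would verify it by comparing with the non-absorbing sequence \((y_i)\) used in the proof of Lemma~\ref{fuje9fk7d99t}.
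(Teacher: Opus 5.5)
Your proof is correct, but it runs the count from the opposite end of the orbits compared with the paper.

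Write $T(z)=\Delta_T(t,u,\vec d,\vec\pi,z)$ and $S(z)=\Delta_S(t,u,\vec d,\vec\pi,z)$. The paper measures the surplus on the $\mathcal C$ side, in three steps:
\begin{enumerate}
\item It applies Proposition~\ref{duif9b445689e} with $x=t$ to get $\occur(u,t)-\occur(u,t^R)=1$.
\item It subtracts the balanced counts of $t$ and $t^R$ inside each palindromic block, obtaining $\vert T(t)\vert-\vert T(t^R)\vert=1$.
\item It cancels $T(t)\setminus S(t)$ against $T(t^R)\setminus S(t^R)$ by an explicitly verified first-return bijection, concluding $\vert S(t)\vert=1+\vert S(t^R)\vert$.
\end{enumerate}

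You instead read the surplus off $\Delta_0$. There it is immediate from the alternation of the words at $\pi_1,\ldots,\pi_\theta$ and the oddness of $\theta$. You then cancel the $\Delta_0$--$\Delta_0$ paths and let the mixed paths carry the surplus over to $\mathcal C$.

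What your route buys:
\begin{itemize}
\item It dispenses with Proposition~\ref{duif9b445689e} and the block-balance count entirely. The paper uses that proposition only for this lemma.
\item The path/cycle decomposition replaces the hand-built pairing. The paper's first-return map is exactly the matching of the two ends of a $\mathcal C$--$\mathcal C$ path.
\item It also gives the paper's sharper identity for free, since $S(z)$ consists precisely of the isolated vertices and the $\mathcal C$-ends of mixed paths that carry $z$.
\end{itemize}

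Two details deserve an explicit sentence in your write-up.
\begin{itemize}
\item $G$ must be read as a multigraph with typed edges. The coincidence $\red_1(x)=\red_2(x)\neq x$ really occurs: take $t=ab$, $u=ababbabab$, $\vec\pi=(1,7,8)$, $\vec d=(1,2,6,9)$, and $x$ the occurrence of $ab$ at position $3$. Such a pair must form a $2$-cycle, not two spurious degree-one endpoints that fit neither of your endpoint classes.
\item $\vert\Delta_0\vert=\theta$, because each $\pi_i$ yields exactly one triple, its $\delta$ being forced by the right endpoint.
\end{itemize}
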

\begin{proof}
Fix the data in the statement, put \(\ell=\vert t\vert\) and
\(\mu=\vert\vec d\vert\), and set \(d_0=0\).  Use the abbreviations
\(\Delta=\Delta(t,u,\vec d)\) and
\(\Delta_0=\Delta_0(t,u,\vec d,\vec\pi)\).  For
\(z\in\{t,t^R\}\), write
\[
 T(z)=\Delta_T(t,u,\vec d,\vec\pi,z),\qquad
 S(z)=\Delta_S(t,u,\vec d,\vec\pi,z).
\]
We will prove the stronger identity
\begin{equation}
\label{eq:uid9eurnn3n-surplus}
 \vert S(t)\vert=1+\vert S(t^R)\vert.
\end{equation}

\medskip
\noindent\textbf{1. Counting all cut-crossing occurrences.}
Proposition~\ref{duif9b445689e}, applied with \(x=t\), gives
\begin{equation}
\label{eq:uid9eurnn3n-occurrence-surplus}
 \occur(u,t)-\occur(u,t^R)
 =\occur(t,t)-\occur(t,t^R)=1,
\end{equation}
because \(t\neq t^R\).  For \(0\leq\delta<\mu\), put
\[
 b_\delta=u[d_\delta+1,d_{\delta+1}].
\]
Every \(b_\delta\) is a palindrome.  Reflection of occurrence
intervals inside this block is a bijection between its occurrences of
\(t\) and its occurrences of \(t^R\).  Thus
\begin{equation}
\label{eq:uid9eurnn3n-block-balance}
 \occur(b_\delta,t)=\occur(b_\delta,t^R)
 \qquad(0\leq\delta<\mu).
\end{equation}

Let \(u[\gamma,\gamma+\ell-1]=z\), with
\(z\in\{t,t^R\}\), and let \(e=\gamma+\ell-1\).
There is a unique \(\delta\) for which
\(d_\delta<e\leq d_{\delta+1}\).  This occurrence is contained
in the block \(b_\delta\) exactly when
\(\gamma\geq d_\delta+1\).  Otherwise
\(\gamma\leq d_\delta\), equivalently
\(\gamma<d_\delta+\tfrac12\), and its associated triple belongs
to \(T(z)\).
These alternatives are disjoint and exhaustive.  An occurrence
crossing several cuts is still counted just once, since its right
endpoint determines \(\delta\) uniquely.  Consequently,
\[
 \vert T(z)\vert
 =\occur(u,z)-\sum_{\delta=0}^{\mu-1}\occur(b_\delta,z).
\]
Subtracting the formulas for \(t\) and \(t^R\), and using
\eqref{eq:uid9eurnn3n-occurrence-surplus} and
\eqref{eq:uid9eurnn3n-block-balance}, yields
\begin{equation}
\label{eq:uid9eurnn3n-T-surplus}
 \vert T(t)\vert-\vert T(t^R)\vert=1.
\end{equation}
Here it is essential that the definition of \(T(z)\) includes
cut-crossing states in \(\Delta_0\).

\medskip
\noindent\textbf{2. Pairing the states whose orbits never reach
\(\Delta_0\).}
Set
\[
 \mathcal C=T(t)\cup T(t^R),\qquad
 N(z)=T(z)\setminus S(z)\quad(z\in\{t,t^R\}).
\]
The union defining \(\mathcal C\) is disjoint because \(t\neq t^R\).
If \(x\in T(z)\cap\Delta_0\), then the iteration starting at
\(x\) already visits \(\Delta_0\) at index \(1\), so
\(x\in S(z)\).  In particular,
\begin{equation}
\label{eq:uid9eurnn3n-N-outside}
 N(z)\subseteq\mathcal C\setminus\Delta_0.
\end{equation}

We use the properties of the reductions verified in the proof of
Lemma~\ref{fuje9fk7d99t}.  Both \(\red_1\) and \(\red_2\) are
involutions of \(\Delta\).  On \(\Delta\setminus\Delta_0\),
\(\red_1\) reverses the third coordinate.  On
\(\Delta\setminus\mathcal C\), \(\red_2\) also reverses the
third coordinate and maps that set into itself.  In particular,
\(\red_2\) has no fixed point outside \(\mathcal C\), since
\(t\neq t^R\).

Fix \(z\in\{t,t^R\}\) and \(x\in N(z)\), and denote its
iteration by
\[
 x_i=x_i^{(x)}\qquad(i\geq1),\qquad x_1=x.
\]
By the definition of \(N(z)\), no \(x_i\) belongs to
\(\Delta_0\).  By \eqref{eq:uid9eurnn3n-N-outside},
Lemma~\ref{fuje9fk7d99t} applies and the sequence is eventually
constant.  Its eventual value must lie in \(\mathcal C\): if it
lay outside \(\mathcal C\), an even transition index in the
constant part of the sequence would apply \(\red_2\) and change
the third coordinate, a contradiction.
It follows that the first-return index
\begin{equation}
\label{eq:uid9eurnn3n-tau}
 \tau(x)=\min\{j\geq2:x_j\in\mathcal C\}
\end{equation}
exists.  Write \(\tau=\tau(x)\).  Then
\[
 x_i\notin\mathcal C\cup\Delta_0\quad(2\leq i<\tau),
 \qquad x_\tau\in\mathcal C\setminus\Delta_0.
\]

The index \(\tau\) is even.  Otherwise \(\tau\geq3\), the
transition at index \(\tau-1\) would use \(\red_2\), and
\(x_{\tau-1}\notin\mathcal C\).  Since \(\red_2\) preserves
\(\Delta\setminus\mathcal C\), this would imply
\(x_\tau\notin\mathcal C\), a contradiction.
Before this first return the absorbing clause never applies: it is
excluded at index \(1\) by the condition \(i>1\), and at the
intermediate indices by the definition of \(\tau\).
Every one of the \(\tau-1\) transitions therefore reverses the
third coordinate.  As \(\tau-1\) is odd,
\[
 y:=x_\tau\in T(z^R).
\]
Define \(\Phi(x)=y\).

We next show that \(y\in N(z^R)\) and \(\Phi(y)=x\).
The forward segment
\[
 x=x_1,x_2,\ldots,x_\tau=y
\]
uses the reductions in an alternating order starting and ending with
\(\red_1\).  Since both reductions are involutions and the number
of transitions is odd, the reversed segment
\[
 y=x_\tau,x_{\tau-1},\ldots,x_2,x_1=x
\]
uses exactly the same alternating order.  This segment is the actual
initial orbit from \(y\), not merely a formal reversal.  Indeed, the
absorbing clause is excluded at its initial index, and every
intermediate state is outside \(\mathcal C\).  Explicitly, if
\((y_j)_{j\geq1}\) is the iteration starting at \(y\), induction
on \(j\) gives
\[
 y_j=x_{\tau-j+1}\qquad(1\leq j\leq\tau).
\]
For the induction step, the forward transition being reversed has
index \(\tau-j\), which has the same parity as \(j\) because
\(\tau\) is even.  Hence the same involution applies in both
directions.

At index \(\tau>1\), the reversed orbit reaches \(x\in\mathcal C\)
and remains constant there by the absorbing clause.  None of the
states in this orbit is in \(\Delta_0\).  Thus \(y\notin S(z^R)\),
and consequently \(y\in N(z^R)\).  Its first return to
\(\mathcal C\) after its initial state is exactly \(x\), so
\(\Phi(y)=x\).
We have constructed mutually inverse maps between \(N(t)\) and
\(N(t^R)\).  Therefore
\begin{equation}
\label{eq:uid9eurnn3n-N-balance}
 \vert N(t)\vert=\vert N(t^R)\vert.
\end{equation}

\medskip
\noindent\textbf{3. The surplus must belong to \(S(t)\).}
For \(z\in\{t,t^R\}\), the sets \(S(z)\) and \(N(z)\) form a
disjoint partition of \(T(z)\).  All these sets are finite.  Hence
\eqref{eq:uid9eurnn3n-T-surplus} and
\eqref{eq:uid9eurnn3n-N-balance} give
\[
\begin{split}
 \vert S(t)\vert-\vert S(t^R)\vert
 &=\bigl(\vert T(t)\vert-\vert T(t^R)\vert\bigr)
   -\bigl(\vert N(t)\vert-\vert N(t^R)\vert\bigr)\\
 &=1.
\end{split}
\]
This proves \eqref{eq:uid9eurnn3n-surplus}.  In particular,
\(\vert S(t)\vert\geq1\), which is the asserted nonemptiness of
\(\Delta_S(t,u,\vec d,\vec\pi,t)\).
\end{proof}

\section{Trimming and palindromic length}\label{sec:trimming}

The next lemma applies to arbitrary nonempty finite words; in particular,
its pattern \(t\) is not assumed to be nonpalindromic.

\begin{lemma}
\label{k90892bjdie}
If \(t_1,t_2,t\in\Sigma^+\), \(t=t_1t_2\), \(u\in\Gamma(t)\), and
\(\overline u\in\Sigma^+\) satisfies \(u=t_1\overline u\), then
\[
 \overline u\in\Gamma(t_2).
\]
\end{lemma}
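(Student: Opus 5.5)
The plan is to build a witness for \(\overline u\in\Gamma(t_2)\) directly from a witness \((u,\vec\pi)\in\widehat\Gamma(t,\theta)\), by shifting each chain position onto an occurrence of \(t_2\) or \(t_2^R\).

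Put \(\ell=\vert t\vert\), \(\ell_1=\vert t_1\vert\), \(\ell_2=\vert t_2\vert\). As in the proof of Proposition~\ref{duif9b445689e}, the occurrence at \(\pi_i\) carries \(t\) for odd \(i\) and \(t^R\) for even \(i\). When \(\theta\) is even we also have \(t=t^R\).
\begin{itemize}
\item If the occurrence at \(\pi_i\) is \(t=t_1t_2\), set \(p_i=\pi_i+\ell_1\). Then \(u[p_i,p_i+\ell_2-1]=t_2\).
\item If it is \(t^R=t_2^Rt_1^R\), set \(p_i=\pi_i\). Then \(u[p_i,p_i+\ell_2-1]=t_2^R\).
\end{itemize}
With this choice \(p_1=\ell_1+1\), which is position \(1\) of \(\overline u\). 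Also \(p_\theta=\pi_\theta+\ell_1=\vert u\vert-\ell_2+1\), the last admissible start of \(t_2\) in \(\overline u\). Both endpoint occurrences carry \(t_2\).

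Next I check the bridges. Consider the palindrome \(q_i=u[\pi_i,\pi_{i+1}+\ell-1]\). The length-\(\ell_2\) occurrences at \(p_i\) and \(p_{i+1}\) are mirror images of each other under reflection in \(q_i\). For instance, when \(i\) is odd, the mirror of \(p_i+\ell_2-1=\pi_i+\ell-1\) is \(\pi_{i+1}=p_{i+1}\); the even case is symmetric.

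So the smallest interval containing the two occurrences is invariant under that reflection, and its factor is a palindrome. In the notation of the proof of Proposition~\ref{ooid9873bw3w}, this gives \(p_i\sim p_{i+1}\). This holds whatever the relative order of \(p_i\) and \(p_{i+1}\). All these intervals lie inside \([\ell_1+1,\vert u\vert]\), so after shifting coordinates by \(\ell_1\) they are factors of \(\overline u\).

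The main obstacle is monotonicity. The shift by \(\ell_1\) can give \(p_{i+1}\leq p_i\) when consecutive occurrences of \(t\) overlap heavily. To repair this I would invoke the straightening claim from the proof of Proposition~\ref{ooid9873bw3w}, applied with \(t_2\) in place of \(t\): choose a chain minimizing total variation, and remove backtracks by iterated reflection. This yields a strictly increasing admissible chain from \(p_1\) to \(p_\theta\), whose endpoints both carry \(t_2\). That chain, shifted by \(-\ell_1\), is the desired witness in \(\widehat\Gamma(t_2)\).

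Two degenerate situations need separate handling.
\begin{itemize}
\item If some \(p_{i+1}=p_i\), delete one of the two positions. The carried words are then equal, so \(t_2=t_2^R\), and the neighbouring bridges remain valid.
\item If \(p_1=p_\theta\), then \(\overline u=t_2\) with witness \((1)\).
\end{itemize}

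The delicate point is the straightening step. Its proof used \(t\neq t^R\) to rule out the middle position landing on an endpoint. When \(t_2\in\Pal^+\) that argument breaks, and I would handle this case directly instead. Every occurrence then carries \(t_2\). A three-term backtrack \(a,b,c\) with \(a<c\) can be replaced by \(a,c\) whenever \(a\sim c\). The iterated reflection either produces a position strictly between \(a\) and \(c\), or lands exactly on \(c\) (respectively \(a\)). In the latter case the reflecting palindrome itself certifies \(a\sim c\). Either way the total variation strictly decreases, which completes the argument.
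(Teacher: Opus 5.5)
Your construction matches the paper's. You use the same projected positions ($\pi_i+\ell_1$ at odd $i$, $\pi_i$ at even $i$), the same mirror-image bridges inside $u[\pi_i,\pi_{i+1}+\ell-1]$, and the same straightening step extended to palindromic patterns. However, there is a genuine gap at the right endpoint when $\theta$ is even.

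Your bridge verification relies on the parity rule. For even $\theta$ that rule gives $p_\theta=\pi_\theta$, which carries $t_2^R$. The position you need, $\vert u\vert-\ell_2+1=\pi_\theta+\ell_1$, is \emph{not} the mirror image of $p_{\theta-1}$ in $u[\pi_{\theta-1},\pi_\theta+\ell-1]$. So your assertions $p_\theta=\pi_\theta+\ell_1$ and $p_{\theta-1}\sim p_\theta$ cannot both hold.

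Concretely, take $t=aba$, $u=ababa$, $\vec\pi=(1,3)$, $t_1=a$, $t_2=ba$. Then $p_1=2$, and $u[2,5]=baba$ is not a palindrome, so $(2,4)$ is not an admissible chain. The parity chain $(2,3)$ instead ends at an occurrence of $t_2^R$ in the wrong place.

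You observed that even $\theta$ forces $t=t^R$ but never used it, and that is exactly the missing step. The last distinguished occurrence $u[\pi_\theta,\vert u\vert]=t$ is then a palindrome. Reflection in it sends the length-$\ell_2$ occurrence at $\pi_\theta$ to the one at $\pi_\theta+\ell_1$, which gives a bridge. So you append $\pi_\theta+\ell_1$ to the chain (in the example, $2,3,4$), as the paper does. This case is not marginal: the translation lemma applies the right-trimming form of this statement to $(B,(1,d+1))\in\widehat\Gamma(A,2)$ with $A$ a palindrome, so the witness there has $\theta=2$ and $t=t^R$.

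Two smaller points.

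First, your claim that all raw bridge intervals lie in $[\ell_1+1,\vert u\vert]$ is false. For $t=abab$, $u=ababab$, $\vec\pi=(1,2,3)$, $t_1=aba$, you get $p_2=\pi_2=2<\ell_1+1$, and the bridge $u[2,4]$ lies outside $\overline u$. This is harmless if you straighten inside $u$, since every reflection takes place in a bridge palindrome, which is a factor of $u$. Only afterwards should you note that a strictly increasing chain from $\ell_1+1$ to $\vert u\vert-\ell_2+1$ has all its occurrences and bridges in $\overline u$. That is the order the paper uses.

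Second, in your palindromic-$t_2$ variant, suppose the reflected middle position lands on an endpoint. The certificate for $a\sim c$ is then the mirror image of the other bridge, not the reflecting palindrome itself. For $a<c<b$, reflection in $u[a,b+\ell_2-1]$ carries $u[c,b+\ell_2-1]$ onto $u[a,c+\ell_2-1]$. Also, a single reflection already lowers the total variation, so no iteration is needed.
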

\begin{proof}
Put \(a=\vert t_1\vert\), \(b=\vert t_2\vert\),
\(\ell=a+b\), and \(N=\vert u\vert\).  If \(u=t\), then
\(\overline u=t_2\), and the one-component vector \((1)\) witnesses
\(\overline u\in\Gamma(t_2)\).  Henceforth suppose \(N>\ell\).

\medskip
\noindent\textbf{A straightening claim.}
We first record a version of the bridge argument that permits a
palindromic pattern.  Fix any \(q\in\Sigma^+\), and write
\(r=\vert q\vert\).  For two distinct starting positions \(c,d\) of
occurrences of \(q\) or \(q^R\) in \(u\), write \(c\sim_q d\) if
\[
 u[\min\{c,d\},\max\{c,d\}+r-1]\in\Pal^+.
\]
Suppose a finite chain of such occurrence positions has the property
that each consecutive pair is either equal or related by \(\sim_q\).
Then it can be replaced by a monotone chain with the same endpoints,
strictly monotone when those endpoints are distinct.  No assumption
\(q\neq q^R\) is required.

If the endpoints are equal, keep just that position.  For distinct
endpoints, first delete any consecutive repeated positions.  Among
all chains with the specified endpoints and the required bridges,
choose one minimizing the nonnegative integer
\[
 \sum_j\vert c_{j+1}-c_j\vert.
\]
If it is not strictly monotone, it contains three consecutive
positions \(A,B,C\) with \(B\) not strictly between \(A\) and \(C\).
If \(A=C\), delete the backtrack \(A,B,A\), retaining its single
endpoint.  This strictly decreases the sum.  Otherwise, by reversing
this three-position subchain if necessary, assume \(A<C\).

If \(A<C<B\), reflect inside the palindrome
\(u[A,B+r-1]\).  The occurrence at \(C\) is sent to an occurrence
of \(q\) or \(q^R\) at
\[
 B'=A+B-C.
\]
The palindromic subfactor \(u[C,B+r-1]\) is sent to
\(u[A,B'+r-1]\), proving \(A\sim_q B'\).  The smallest interval
containing the occurrences at \(C\) and \(B'\) is invariant under
this same reflection, so it is a palindrome as well.  Thus
\(B'\sim_q C\) when \(B'\neq C\).  When \(B'=C\), delete the
repeated position instead.  All intervals just used lie inside
\([A,B+r-1]\).  Since \(A<B'<B\),
\[
 \vert A-B'\vert+\vert B'-C\vert
 <\vert A-B\vert+\vert B-C\vert.
\]

If \(B<A<C\), reflect instead inside \(u[B,C+r-1]\), and put
\[
 B'=B+C-A.
\]
The occurrence at \(A\) is reflected to \(B'\).  The palindrome
\(u[B,A+r-1]\) is reflected to \(u[B',C+r-1]\), and the hull of
the occurrences at \(A\) and \(B'\) is invariant under the same
reflection.  These facts give the bridges from \(A\) to \(B'\) and
from \(B'\) to \(C\), with any repeated consecutive position deleted.
The new intervals lie inside \([B,C+r-1]\).  Since \(B<B'<C\),
\[
 \vert A-B'\vert+\vert B'-C\vert
 <\vert A-B\vert+\vert B-C\vert.
\]
Both cases contradict minimality.  This proves the claim, including
when the occurrences overlap or \(q\) is palindromic.

\medskip
\noindent\textbf{Projecting the distinguished occurrences.}
Choose a witness
\[
 (u,\vec\pi)\in\widehat\Gamma(t,\theta),
 \qquad \vec\pi=(\pi_1,\ldots,\pi_\theta).
\]
Let \(s_j=u[\pi_j,\pi_j+\ell-1]\).  Each palindromic bridge has
\(s_j\) as its length-\(\ell\) prefix and \(s_{j+1}\) as its
length-\(\ell\) suffix.  Consequently \(s_{j+1}=s_j^R\).  Since
\(s_1=t\), it follows that
\[
 s_j=\begin{cases}
 t,&j\text{ odd},\\
 t^R,&j\text{ even}.
 \end{cases}
\]
This formula is valid also when \(t=t^R\).

For \(1\leq j\leq\theta\), select the occurrence starting at
\[
 \beta_j=\begin{cases}
 \pi_j+a,&j\text{ odd},\\
 \pi_j,&j\text{ even}.
 \end{cases}
\]
It carries \(t_2\) at an odd index, and \(t_2^R\) at an even index.
Reflection in \(u[\pi_j,\pi_{j+1}+\ell-1]\) sends the selected
length-\(b\) occurrence at \(\beta_j\) to one starting at
\[
 \pi_j+\pi_{j+1}+\ell-1-(\beta_j+b-1)
 =\pi_j+\pi_{j+1}+a-\beta_j
 =\beta_{j+1}.
\]
Thus successive selected occurrences have a palindromic bridge when
they are distinct; coincident successive positions may be deleted.
All these occurrences and bridges are factors of \(u\).
The initial position is \(\beta_1=a+1\), carrying \(t_2\).

If \(\theta\) is odd, the last position is
\[
 \beta_\theta=\pi_\theta+a=N-b+1,
\]
and it also carries \(t_2\).  If \(\theta\) is even, the final
distinguished occurrence carries both \(t^R\), by the parity formula,
and \(t\), because \(t\) is a suffix of \(u\).  Therefore
\(t=t^R\).  In this case \(u[\pi_\theta,N]=t\) is a palindrome.
Reflect its length-\(b\) prefix, at \(\beta_\theta=\pi_\theta\),
to its length-\(b\) suffix, at \(N-b+1=\pi_\theta+a\).
Append this last position to the selected chain.  The appended
occurrence carries \(t_2\), and the two occurrences have a
palindromic bridge inside this final copy of \(t\).

In either case we have a chain from \(a+1\) to \(N-b+1\), whose
endpoint occurrences both carry \(t_2\).  These endpoints are
distinct because \(N>\ell\).  Apply the straightening claim with
\(q=t_2\) to obtain
\[
 a+1=\rho_1<\rho_2<\cdots<\rho_h=N-b+1.
\]
Every position carries \(t_2\) or \(t_2^R\), and every consecutive
pair has a palindromic bridge.  In particular, every new occurrence
and bridge lies in \(u[a+1,N]=\overline u\).
Set \(\sigma_j=\rho_j-a\).  Then
\[
 \sigma_1=1,\qquad
 \sigma_h=N-a-b+1=\vert\overline u\vert-b+1.
\]
Translation of coordinates gives
\[
 \overline u[\sigma_j,\sigma_j+b-1]\in\{t_2,t_2^R\},
\]
and, for \(j<h\),
\[
 \overline u[\sigma_j,\sigma_{j+1}+b-1]\in\Pal^+.
\]
Both endpoint occurrences are \(t_2\).  Hence
\[
 (\overline u,(\sigma_1,\ldots,\sigma_h))
 \in\widehat\Gamma(t_2,h),
\]
which proves the lemma.
\end{proof}

We also require its symmetric form:
\begin{equation}
\label{eq:k90892bjdie-right-trimming}
 \begin{gathered}
 t=t_1t_2,\quad u\in\Gamma(t)
 \quad\Longrightarrow\quad
 u[1,\vert u\vert-\vert t_2\vert]\in\Gamma(t_1).
 \end{gathered}
\end{equation}
To justify it, observe first that \(W\in\Gamma(s)\) implies
\(W^R\in\Gamma(s^R)\): a witness \((\rho_1,\ldots,\rho_h)\)
is reflected to the increasing witness
\[
 (\vert W\vert-\vert s\vert+2-\rho_{h+1-j})_{j=1}^{h}.
\]
The occurrence and bridge conditions follow by reversing their
intervals.  Apply Lemma~\ref{k90892bjdie} to
\(u^R\in\Gamma(t^R)\) and \(t^R=t_2^Rt_1^R\), deleting the
prefix \(t_2^R\).  Reversing the resulting membership in
\(\Gamma(t_1^R)\) proves
\eqref{eq:k90892bjdie-right-trimming}.

\subsection{The factorization-boundary cut}

We now formalize the eight-output operation at a factorization
boundary.  It is distinguished by its arguments from the earlier
three-argument occurrence cut, whose two output words overlap.
Fix
\[
 t\in\Sigma^+\setminus\Pal^+,\qquad
 (u,\vec\pi)\in\widehat\Gamma(t),\qquad
 \vec d=(d_1,\ldots,d_\mu)\in\PalFac(u),
\]
and let \(d_0=0\).  Choose
\[
 (\gamma,\delta,z)\in\Delta_S(t,u,\vec d,\vec\pi,t).
\]
Thus \(z=t\).  The occurrence and crossing conditions imply
\begin{equation}
\label{eq:qyyu938jfur-crossing}
 \gamma\leq d_\delta<\gamma+\vert t\vert-1
 \leq d_{\delta+1}.
\end{equation}
Consequently \(1\leq\delta\leq\mu-1\), and the integer
\(r=d_\delta-\gamma+1\) satisfies \(1\leq r<\vert t\vert\).
Define
\begin{equation}
\label{eq:qyyu938jfur-cut-words}
 \begin{aligned}
 v_1&=u[1,d_\delta],&
 v_2&=u[d_\delta+1,\vert u\vert],\\
 t_1&=u[\gamma,d_\delta]=t[1,r],&
 t_2&=u[d_\delta+1,\gamma+\vert t\vert-1]
       =t[r+1,\vert t\vert].
 \end{aligned}
\end{equation}
All four words are nonempty, and \(u=v_1v_2\), \(t=t_1t_2\).
Set
\begin{equation}
\label{eq:qyyu938jfur-cut-vectors}
 \begin{aligned}
 \vec d_1&=(d_1,\ldots,d_\delta),\\
 \vec d_2&=(d_{\delta+1}-d_\delta,\ldots,d_\mu-d_\delta).
 \end{aligned}
\end{equation}
These vectors retain whole palindromic blocks and hence satisfy
\begin{equation}
\label{eq:qyyu938jfur-cut-budget}
 \begin{gathered}
 \vec d_1\in\PalFac(v_1,\delta),\qquad
 \vec d_2\in\PalFac(v_2,\mu-\delta),\\
 \vert\vec d_1\vert+\vert\vec d_2\vert=\mu.
 \end{gathered}
\end{equation}

We verify the existence of witness vectors for the new patterns.
By Proposition~\ref{ooid9873bw3w}, the earlier occurrence cut gives
\[
 \widetilde v_1=u[1,\gamma+\vert t\vert-1]\in\Gamma(t),
 \qquad
 \widetilde v_2=u[\gamma,\vert u\vert]\in\Gamma(t).
\]
Here \(\widetilde v_1=v_1t_2\) and
\(\widetilde v_2=t_1v_2\).  Lemma~\ref{k90892bjdie} and
\eqref{eq:k90892bjdie-right-trimming} therefore imply
\begin{equation}
\label{eq:qyyu938jfur-cut-gamma}
 v_1\in\Gamma(t_1),\qquad v_2\in\Gamma(t_2).
\end{equation}
Thus we may choose vectors \(\vec\pi_1,\vec\pi_2\) with
\[
 (v_i,\vec\pi_i)\in\widehat\Gamma(t_i)
 \qquad(i\in\{1,2\}).
\]
To make the operation single-valued, choose for each pair \((t_i,v_i)\)
a witness with the fewest components and, among those, the
lexicographically least one.  Such a choice exists: the set of
witnesses is nonempty by \eqref{eq:qyyu938jfur-cut-gamma} and finite,
since each is a strictly increasing vector of positions in \(v_i\).
Any fixed choice of valid witnesses would suffice in the proof below.
We now define
\begin{equation}
\label{eq:qyyu938jfur-cut-definition}
 \begin{split}
 &\cut(u,\vec d,\gamma,\delta,z)\\
 &\hspace{1em}=(v_1,v_2,t_1,t_2,
                \vec\pi_1,\vec\pi_2,\vec d_1,\vec d_2).
 \end{split}
\end{equation}
The input witness \(\vec\pi\) is used to certify membership in
\(\Delta_S\).  Once that membership holds, all eight selected
outputs above are determined by the displayed inputs.

\subsection{The recursive family}

For arbitrary \(t\in\Sigma^+\),
\((u,\vec\pi)\in\widehat\Gamma(t)\), and
\(\vec d\in\PalFac(u)\), define
\[
 \Omega(0)=\{(t,u,\vec\pi,\vec d,1)\}.
\]
A state is written in the consistent order
\[
 \omega=(s,W,\vec\rho,\vec e,g),
 \qquad (W,\vec\rho)\in\widehat\Gamma(s),\quad
 \vec e\in\PalFac(W).
\]
The coordinate \(g\) records the starting position of \(s\) in the
\emph{original pattern} \(t\), not an occurrence position in \(W\).

If \(s\in\Pal^+\), put \(\mathcal R(\omega)=\{\omega\}\).
If \(s\notin\Pal^+\), Lemma~\ref{uid9eurnn3n} gives
\[
 \Delta_S(s,W,\vec e,\vec\rho,s)\neq\emptyset.
\]
Select its element \((\gamma,\delta,s)\) with the smallest starting
position \(\gamma\); its second coordinate is then unique.  Write
\[
 \begin{split}
 &\cut(W,\vec e,\gamma,\delta,s)\\
 &\hspace{1em}=(W_1,W_2,s_1,s_2,
               \vec\rho_1,\vec\rho_2,\vec e_1,\vec e_2),
 \end{split}
\]
and put
\[
 \begin{split}
 \mathcal R(\omega)=\{
 &(s_1,W_1,\vec\rho_1,\vec e_1,g),\\
 &(s_2,W_2,\vec\rho_2,\vec e_2,g+\vert s_1\vert)\}.
 \end{split}
\]
Thus only one admissible cut is selected for each nonterminal state;
we do not collect the outputs of all possible choices.  For
\(j\in\mathbb N_1\cup\{0\}\), define
\begin{equation}
\label{eq:qyyu938jfur-Omega-recursion}
 \Omega(j+1)=\bigcup_{\omega\in\Omega(j)}\mathcal R(\omega).
\end{equation}
The memberships needed for subsequent steps follow from
\eqref{eq:qyyu938jfur-cut-budget} and
\eqref{eq:qyyu938jfur-cut-gamma}.  On palindromic labels we retain the
state without evaluating \(\Delta_S\), whose reflection construction
was introduced for nonpalindromic patterns.  The proof below shows
that distinct states have distinct offsets, so taking a union in
\eqref{eq:qyyu938jfur-Omega-recursion} does not identify different
pieces of the original pattern.

\begin{proposition}
\label{qyyu938jfur}
If \(\PL(t)=k\) and \(u\in\Gamma(t)\), then \(\PL(u)\geq k\).
\end{proposition}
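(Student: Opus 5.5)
We argue by contraposition: we start from a minimal palindromic factorization of \(u\) and produce a palindromic factorization of \(t\) with no more pieces. Let \(\mu=\PL(u)\), and choose \(\vec d\in\PalFac(u,\mu)\) together with a witness \((u,\vec\pi)\in\widehat\Gamma(t)\). We then run the recursive family \(\Omega(j)\) from the initial state \((t,u,\vec\pi,\vec d,1)\). The goal is to show that the recursion terminates in a finite set of terminal states whose labels \(s\) are palindromes. These labels will tile \(t\) contiguously via their offsets \(g\), and their number will be at most \(\mu\). This yields \(\PL(t)\leq\mu\).

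\textbf{Invariants.} For every \(j\) we prove by induction the following three statements.
\begin{enumerate}[label=(\roman*)]
\item Each state \((s,W,\vec\rho,\vec e,g)\in\Omega(j)\) satisfies \((W,\vec\rho)\in\widehat\Gamma(s)\) and \(\vec e\in\PalFac(W)\). This follows from \eqref{eq:qyyu938jfur-cut-gamma} and \eqref{eq:qyyu938jfur-cut-budget}.
\item Ordering the states by \(g\), the labels \(s\) concatenate to \(t\), with each \(s=t[g,g+|s|-1]\). This holds because a cut replaces \(s\) at offset \(g\) by \(s_1\) at \(g\) and \(s_2\) at \(g+|s_1|\), where \(s=s_1s_2\). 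In particular, distinct states have distinct offsets, so no states are merged in the union \eqref{eq:qyyu938jfur-Omega-recursion}.
\item \(\sum_{\omega\in\Omega(j)}|\vec e|\leq\mu\). This holds because a cut splits \(\vec e\) into \(\vec e_1,\vec e_2\) with \(|\vec e_1|+|\vec e_2|=|\vec e|\), and a terminal state is kept unchanged.
\end{enumerate}
Since every \(\vec e\) has at least one component, the number of states is at most \(\mu\) at every stage.

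\textbf{Termination.} Each nonterminal state is replaced by two states whose labels are strictly shorter, because \(1\leq r<|s|\) in \eqref{eq:qyyu938jfur-crossing}. Hence the total number of states strictly increases whenever some state is nonpalindromic. The number of states is bounded by \(\mu\), and also by \(|t|\) since the labels are nonempty. So after finitely many steps every label is a palindrome; a single letter is always a palindrome, which gives an even cruder bound. At that point \(t\) is a concatenation of the \(|\Omega(j)|\leq\mu\) palindromic labels. Therefore \(k=\PL(t)\leq\mu=\PL(u)\).

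\textbf{Main obstacle.} The delicate step is confirming that the recursion is well defined at every nonpalindromic state. This requires that \(\Delta_S(s,W,\vec e,\vec\rho,s)\) be nonempty, which is Lemma~\ref{uid9eurnn3n}. It also requires that the chosen crossing occurrence genuinely straddles a boundary with \(1\leq\delta\leq|\vec e|-1\), so that both \(\vec e_1\) and \(\vec e_2\) are nonempty and both pieces \(s_1,s_2\) are nonempty. The first point is supplied by Lemma~\ref{uid9eurnn3n}. The second follows from the crossing inequality \eqref{eq:qyyu938jfur-crossing} together with \(\gamma\geq1\), which gives \(d_\delta\geq1\).

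One more case needs checking. A state whose \(\vec e\) has a single component has \(W\) a palindrome. There the crossing set \(\Delta_T\) is empty, and Lemma~\ref{uid9eurnn3n} would then be contradicted unless \(s\) is a palindrome. So such states are automatically terminal, and this is consistent with the budget in invariant (iii).
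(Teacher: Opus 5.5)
Your proposal is correct and follows the paper's proof essentially step for step. Like the paper, you run the recursion $\Omega(j)$ from a minimal factorization $\vec d\in\PalFac(u,\mu)$, maintain the offset/concatenation and component-budget invariants so that there are at most $\mu$ states, and use Lemma~\ref{uid9eurnn3n} and the crossing inequality to ensure that every nonpalindromic state splits into two nonempty pieces. This forces termination with a palindromic factorization of $t$ into at most $\mu=\PL(u)$ factors. The only cosmetic differences are that you state the budget as $\leq\mu$ rather than $=\mu$, and that the argument is direct rather than by contraposition.
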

\begin{proof}
Put \(\mu=\PL(u)\).  Choose
\[
 (u,\vec\pi)\in\widehat\Gamma(t),\qquad
 \vec d\in\PalFac(u,\mu),
\]
and use these data to form \(\Omega(j)\) as above.

\medskip
\noindent\textbf{1. The invariants.}
For every \(j\geq0\), we claim that the states of \(\Omega(j)\)
can be listed as
\[
 \omega_{j,i}=(s_{j,i},W_{j,i},\vec\rho_{j,i},\vec e_{j,i},g_{j,i})
 \qquad(1\leq i\leq h_j)
\]
in strictly increasing order of \(g_{j,i}\), and satisfy
\begin{equation}
\label{eq:qyyu938jfur-membership-invariant}
 (W_{j,i},\vec\rho_{j,i})\in\widehat\Gamma(s_{j,i}),
 \qquad \vec e_{j,i}\in\PalFac(W_{j,i}),
\end{equation}
\begin{equation}
\label{eq:qyyu938jfur-concatenation-invariant}
 t=s_{j,1}\cdots s_{j,h_j},\qquad
 u=W_{j,1}\cdots W_{j,h_j},
\end{equation}
\begin{equation}
\label{eq:qyyu938jfur-offset-invariant}
 g_{j,i}=1+\sum_{a<i}\vert s_{j,a}\vert,
\end{equation}
and
\begin{equation}
\label{eq:qyyu938jfur-budget-invariant}
 \sum_{i=1}^{h_j}\vert\vec e_{j,i}\vert=\mu.
\end{equation}
All patterns and words in these formulas are nonempty.

The assertions hold at \(j=0\), where \(h_0=1\).  Suppose they
hold at stage \(j\).  A retained state changes none of the formulas.
A split replaces its pattern \(s\) by two nonempty factors
\(s=s_1s_2\), and its word \(W\) by two nonempty consecutive
factors \(W=W_1W_2\).  The new memberships follow from
\eqref{eq:qyyu938jfur-cut-budget} and
\eqref{eq:qyyu938jfur-cut-gamma}.  The offsets \(g\) and
\(g+\vert s_1\vert\) assign the children the two consecutive
intervals
\[
 [g,g+\vert s_1\vert-1],\qquad
 [g+\vert s_1\vert,g+\vert s\vert-1]
\]
inside their parent's interval in \(t\).  Parent intervals are
disjoint by the induction hypothesis.  Thus all new offsets remain
distinct, and the states can be ordered as asserted.  There is no
loss of multiplicity in the union defining \(\Omega(j+1)\).
The pattern and word concatenations and the offset formula are
preserved.  Finally, the two new factorization vectors have a total
of \(\vert\vec e\vert\) components, by
\eqref{eq:qyyu938jfur-cut-budget}, so their total budget is unchanged.
This proves all four invariants by induction.

\medskip
\noindent\textbf{2. Termination.}
Every \(\vec e_{j,i}\) has at least one component.  Therefore
\eqref{eq:qyyu938jfur-budget-invariant} gives
\begin{equation}
\label{eq:qyyu938jfur-state-bound}
 h_j\leq\mu\qquad(j\geq0).
\end{equation}
Let \(b_j\) be the number of nonpalindromic patterns among the
states at stage \(j\).  Lemma~\ref{uid9eurnn3n} ensures that each
such state can be split, and the definition retains exactly the
other states.  Distinctness of the new offsets gives
\[
 h_{j+1}=h_j+b_j.
\]
If \(b_j>0\) at each of the stages \(0,\ldots,\mu-1\), then
\(h_\mu\geq1+\mu\), contradicting
\eqref{eq:qyyu938jfur-state-bound}.  Hence there is
\(j_0\in\{0,\ldots,\mu-1\}\) such that \(b_{j_0}=0\).
Every pattern in \(\Omega(j_0)\) is a nonempty palindrome.  All its
states are retained thereafter, so
\[
 \Omega(j)=\Omega(j_0)\qquad(j\geq j_0).
\]

\medskip
\noindent\textbf{3. Counting the terminal factors.}
Set \(h=h_{j_0}\).  By
\eqref{eq:qyyu938jfur-concatenation-invariant}, the terminal labels
give the palindromic factorization
\[
 t=s_{j_0,1}\cdots s_{j_0,h}.
\]
Consequently \(k=\PL(t)\leq h\).  On the other hand,
\eqref{eq:qyyu938jfur-state-bound} gives
\(h\leq\mu=\PL(u)\).  Therefore
\[
 \boxed{\PL(t)=k\leq h\leq\mu=\PL(u).}
\]
This proves the proposition.  If \(t\) is already a palindrome, the
same argument stops at \(j_0=0\) and gives \(1\leq\PL(u)\).
\end{proof}

\section{Density of prefixes with maximal palindromic length}\label{sec:density}
For a nonempty finite word \(u\), define
\[
 \maxPrefixPL(u)
 =\max\{\PL(v):v\in\Prefix(u)\cap\Sigma^+\}.
\]
Only nonempty prefixes are used, since \(\PL\) was defined on
\(\Sigma^+\).  For \(w\in\Sigma^{\infty}\) and
\(n\in\mathbb N_1\), put
\[
\begin{split}
 T_w(n)=\{\,&w[1,j]:1\leq j\leq n,\\
           &\PL(w[1,j])=\maxPrefixPL(w[1,j])\,\}.
\end{split}
\]
Thus a prefix is counted whenever its palindromic length attains the
running maximum, including a tie with an earlier maximum.  Let
\[
 \FF(w)=\Pal^+\cap\Prefix(w),\qquad
 P_w(n)=\vert\Pal^+\cap\Prefix(w[1,n])\vert.
\]

\begin{lemma}[Translation between palindromic prefixes]
\label{lem:nfh76dg347g-translation}
Let \(W\in\Sigma^+\), and suppose that
\(1\leq a<b\leq\vert W\vert\) satisfy
\[
 W[1,a],W[1,b]\in\Pal^+.
\]
Put \(d=b-a\).  For every integer \(r\) with \(1\leq r\leq a\),
\begin{equation}
\label{eq:nfh76dg347g-shift-membership}
 W[1,r+d]\in\Gamma(W[1,r]).
\end{equation}
Consequently,
\begin{equation}
\label{eq:nfh76dg347g-shift-PL}
 \PL(W[1,r+d])\geq\PL(W[1,r])
 \qquad(1\leq r\leq a).
\end{equation}
\end{lemma}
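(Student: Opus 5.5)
The plan is to avoid building a chain for the short pattern \(W[1,r]\) directly. A direct chain is awkward, because the natural bridges \(W[1,a]\) may extend beyond position \(r+d\). Instead, I will first prove a \(\Gamma\)-membership for the long pattern \(p=W[1,a]\) inside \(U=W[1,b]\), and then cut \(p\) down with the right-trimming form \eqref{eq:k90892bjdie-right-trimming} of Lemma~\ref{k90892bjdie}.

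\textbf{Step 1: \(U\in\Gamma(p)\).} Since \(U=W[1,b]\) is a palindrome, its suffix of length \(a\) is the reversal of its prefix of length \(a\). That prefix is \(p\), and \(p^R=p\) because \(W[1,a]\) is a palindrome. Hence \(U[d+1,b]=p\), so \(p\) is both a prefix and a suffix of \(U\). Take \(\vec\pi=(1,d+1)\). We have \(1<d+1=|U|-|p|+1\), both occurrences carry \(p\in\{p,p^R\}\), and the single bridge \(U[1,(d+1)+a-1]=U[1,b]\) is a palindrome. Therefore \((U,(1,d+1))\in\widehat\Gamma(p,2)\).

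\textbf{Step 2: trimming.} If \(r=a\), then \(W[1,r+d]=U\), and Step 1 is already \eqref{eq:nfh76dg347g-shift-membership}. If \(r<a\), write \(p=p_1p_2\) with \(p_1=W[1,r]\) and \(p_2=W[r+1,a]\), both nonempty. Apply \eqref{eq:k90892bjdie-right-trimming} to \(U\in\Gamma(p)\). This gives
\[
U[1,|U|-|p_2|]=W[1,b-(a-r)]=W[1,r+d]\in\Gamma(W[1,r]),
\]
which is \eqref{eq:nfh76dg347g-shift-membership}.

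\textbf{Step 3: the length inequality.} Proposition~\ref{qyyu938jfur}, applied with \(t=W[1,r]\) and \(u=W[1,r+d]\), yields \eqref{eq:nfh76dg347g-shift-PL}.

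The only delicate point is the choice of pattern in Step 1. Working with \(W[1,r]\) from the start, the reflections in \(W[1,a]\) and \(W[1,b]\) produce occurrences at positions \(1\), \(a-r+1\) and \(d+1\). The resulting chain is not monotone, and its bridges need not lie inside \(W[1,r+d]\). Passing through the larger palindromic pattern \(W[1,a]\) and then trimming removes this obstacle, since Lemma~\ref{k90892bjdie} already contains the straightening argument.
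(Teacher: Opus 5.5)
Your proposal is correct and follows the paper's proof essentially step for step. You use the witness $(1,d+1)$ to show $W[1,b]\in\Gamma(W[1,a])$, then split into $r=a$ versus $r<a$, removing $W[r+1,a]$ with the right-trimming property \eqref{eq:k90892bjdie-right-trimming}, and finish by applying Proposition~\ref{qyyu938jfur}.
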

\begin{proof}
Put \(A=W[1,a]\) and \(B=W[1,b]\).  The word \(A\) is a
prefix of the palindrome \(B\), so \(A^R\) is a suffix of \(B\).
Since \(A=A^R\), both the prefix and the suffix of \(B\) of length
\(a\) equal \(A\).  Therefore
\[
 (B,(1,d+1))\in\widehat\Gamma(A,2).
\]
Indeed, the distinguished occurrences start at \(1\) and
\(d+1=b-a+1\), and their sole bridge is the whole palindrome
\(B\).  This verification does not require the two occurrences to
be disjoint.  In particular, \(B\in\Gamma(A)\).

If \(r=a\), then \(W[1,r+d]=B\) and \(W[1,r]=A\), which proves
\eqref{eq:nfh76dg347g-shift-membership} in that case.  If \(r<a\),
write
\[
 A=vs,\qquad v=W[1,r],\qquad s=W[r+1,a].
\]
Both \(v\) and \(s\) are nonempty.  The symmetric trimming property
\eqref{eq:k90892bjdie-right-trimming}, proved after
Lemma~\ref{k90892bjdie}, gives
\[
 B[1,\vert B\vert-\vert s\vert]\in\Gamma(v).
\]
Since \(\vert B\vert-\vert s\vert=b-(a-r)=r+d\), this is exactly
\eqref{eq:nfh76dg347g-shift-membership}.  Applying
Proposition~\ref{qyyu938jfur} proves
\eqref{eq:nfh76dg347g-shift-PL}.
\end{proof}

\begin{proposition}
\label{nfh76dg347g}
For every \(w\in\Sigma^{\infty}\) and every
\(n\in\mathbb N_1\),
\begin{equation}
\label{eq:nfh76dg347g-finite-bound}
 \vert T_w(n)\vert
 \geq P_w(n)
 =\vert\Pal^+\cap\Prefix(w[1,n])\vert.
\end{equation}
In particular, if \(\vert\FF(w)\vert=\infty\), then
\begin{equation}
\label{eq:nfh76dg347g-liminf}
 \liminf_{n\to\infty}
 \frac{\vert T_w(n)\vert}
      {\vert\Pal^+\cap\Prefix(w[1,n])\vert}
 \geq 1.
\end{equation}
\end{proposition}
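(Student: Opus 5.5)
Let \(1\leq a_1<a_2<\cdots<a_p\leq n\) be the lengths of the nonempty palindromic prefixes of \(w[1,n]\), so \(p=P_w(n)\). The aim is to find \(p\) distinct indices \(j\leq n\) with \(w[1,j]\in T_w(n)\), one in each of the disjoint intervals
\[
 I_1=[1,a_1],\qquad I_i=(a_{i-1},a_i]\quad(2\leq i\leq p).
\]
Since these intervals are pairwise disjoint and contained in \([1,n]\), the selected prefixes are distinct, and the bound \eqref{eq:nfh76dg347g-finite-bound} follows. For \(I_1\), the index \(j=1\) works: \(\PL(w[1,1])=1=\maxPrefixPL(w[1,1])\), and \(1\leq a_1\).

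For \(i\geq2\), put \(M=\maxPrefixPL(w[1,a_i])\) and let \(j\) be the \emph{largest} index in \([1,a_i]\) with \(\PL(w[1,j])=M\). Then \(\PL(w[1,j])\geq\PL(w[1,j'])\) for every \(j'\leq j\), so \(w[1,j]\in T_w(n)\). It remains to show \(j>a_{i-1}\). Suppose instead \(j\leq a_{i-1}\). Apply Lemma~\ref{lem:nfh76dg347g-translation} with \(W=w[1,a_i]\), \(a=a_{i-1}\), \(b=a_i\), \(d=a_i-a_{i-1}\), and \(r=j\), which is admissible since \(1\leq j\leq a\). It gives
\[
 \PL(w[1,j+d])\geq\PL(w[1,j])=M .
\]
Since \(j+d\leq a_i\), the left side is at most \(M\), so equality holds. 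But \(j+d>j\), contradicting the maximality of \(j\). Hence \(j\in I_i\).

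The liminf statement is then immediate. When \(|\FF(w)|=\infty\), we have \(P_w(n)\geq1\) for all large \(n\), so the quotient is defined and, by \eqref{eq:nfh76dg347g-finite-bound}, is at least \(1\) for each such \(n\).

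The only substantive input is the translation lemma, which rests on Proposition~\ref{qyyu938jfur}; everything else is bookkeeping. The one point needing care is choosing \(j\) as the last maximizer rather than the first, which is what makes the translated index exceed \(j\).
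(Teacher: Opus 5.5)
Your proof is correct and follows essentially the same route as the paper: you pick one index per interval \((a_{i-1},a_i]\), namely the rightmost maximizer of \(\PL(w[1,\cdot\,])\) on \([1,a_i]\), and use the translation lemma to rule out \(r_i\leq a_{i-1}\). One minor remark: \(P_w(n)\geq1\) holds for every \(n\geq1\), not only for large \(n\), since \(w[1,1]\) is a palindrome; so the quotient is always defined.
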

\begin{proof}
Fix \(n\geq1\).  For \(1\leq j\leq n\), abbreviate
\[
 L(j)=\PL(w[1,j]),\qquad
 M(j)=\max_{1\leq r\leq j}L(r).
\]
Let
\[
 a_1<a_2<\cdots<a_s
\]
be all lengths of nonempty palindromic prefixes of \(w[1,n]\).
The first letter is a palindrome, so \(s\geq1\) and \(a_1=1\).
Also, \(s=P_w(n)\).  Set \(a_0=0\).

For each \(i\in\{1,\ldots,s\}\), choose the rightmost occurrence
of the maximal prefix palindromic length up to \(a_i\):
\begin{equation}
\label{eq:nfh76dg347g-rightmost-maximum}
 r_i=\max\{r\in\{1,\ldots,a_i\}:L(r)=M(a_i)\}.
\end{equation}
This set is nonempty because a maximum of a finite nonempty set of
integers is attained.  We claim that
\begin{equation}
\label{eq:nfh76dg347g-disjoint-intervals}
 a_{i-1}<r_i\leq a_i
 \qquad(1\leq i\leq s).
\end{equation}
For \(i=1\), we have \(r_1=1\), so the claim holds.
Suppose \(i\geq2\), and put \(d_i=a_i-a_{i-1}>0\).
If \(r_i\leq a_{i-1}\), apply
Lemma~\ref{lem:nfh76dg347g-translation} to the palindromic prefixes
of lengths \(a_{i-1}\) and \(a_i\), with \(r=r_i\).
It gives
\[
 L(r_i+d_i)\geq L(r_i)=M(a_i).
\]
But
\[
 r_i<r_i+d_i\leq a_i.
\]
By the definition of \(M(a_i)\), the preceding inequality must be
an equality.  This contradicts the choice of \(r_i\) as the
rightmost index attaining \(M(a_i)\).  Thus
\(r_i>a_{i-1}\), proving
\eqref{eq:nfh76dg347g-disjoint-intervals}.

For each \(i\), the definition of \(r_i\) gives
\[
 L(r_i)=M(a_i).
\]
Since \(r_i\leq a_i\), we also have
\[
 L(r_i)\leq M(r_i)\leq M(a_i)=L(r_i).
\]
Consequently \(L(r_i)=M(r_i)\), so
\[
 w[1,r_i]\in T_w(n).
\]
The intervals \((a_{i-1},a_i]\) in
\eqref{eq:nfh76dg347g-disjoint-intervals} are disjoint and ordered.
Hence \(r_1<\cdots<r_s\), and the \(s\) prefixes just constructed
are distinct.  Therefore
\[
 \vert T_w(n)\vert\geq s=P_w(n),
\]
which proves \eqref{eq:nfh76dg347g-finite-bound}.

The denominator \(P_w(n)\) is positive for every \(n\geq1\).
Dividing \eqref{eq:nfh76dg347g-finite-bound} by \(P_w(n)\) and
taking the lower limit proves \eqref{eq:nfh76dg347g-liminf}.
When \(\vert\FF(w)\vert=\infty\), one also has
\(P_w(n)\to\infty\), and hence \(\vert T_w(n)\vert\to\infty\).
\end{proof}

\begin{remark}[Scope of the limit statement]
\label{rem:nfh76dg347g-limit-scope}
The proof establishes a pointwise lower bound and therefore a positive
lower limit.  It does not establish that
\(\vert T_w(n)\vert/P_w(n)\) has an ordinary limit.
If an ordinary limit exists, whether finite or \(+\infty\),
\eqref{eq:nfh76dg347g-finite-bound} forces it to be at least \(1\).
Thus replacing \(\liminf\) in
\eqref{eq:nfh76dg347g-liminf} by \(\lim\) would require a separate
convergence argument; no such claim is made here.
\end{remark}

\begin{example}[Sharpness of the coefficient]\label{ex:sharpness}
Let \(w=(ab)^\infty\), with \(a\neq b\).  Its odd-length prefixes
are palindromes.  Every even-length prefix is nonpalindromic but is
an odd-length palindrome followed by one letter, so its palindromic
length is \(2\).  Thus the running-maximum indices are \(1\) and
all positive even integers, and
\[
 P_w(n)=\left\lceil\frac n2\right\rceil,
 \qquad
 \vert T_w(n)\vert=1+\left\lfloor\frac n2\right\rfloor.
\]
Equality holds in \eqref{eq:nfh76dg347g-finite-bound} for every odd
\(n\), and \(\vert T_w(n)\vert/P_w(n)\to1\).  Therefore the
universal constant \(1\) in \eqref{eq:nfh76dg347g-liminf} cannot
be increased.
\end{example}

\section*{Declaration of generative AI and AI-assisted technologies}

During the preparation of this manuscript, the author used OpenAI ChatGPT
as an AI-assisted research and writing tool. ChatGPT was used to assist
with checking mathematical arguments, identifying possible gaps and
counterexamples, developing and refining proof details, improving the
mathematical exposition and English language, assisting with literature
searches, and preparing and checking the \LaTeX{} source.

All mathematical statements, proofs, references, and conclusions were
subsequently reviewed and verified by the author. The author made all
final mathematical and editorial decisions and takes full responsibility
for the content of the manuscript.

\end{document}